\documentclass[11pt]{amsart}
\usepackage[active]{srcltx}
\usepackage[utf8]{inputenc}
\usepackage{amsmath,mathtools}
\usepackage{stmaryrd}
\usepackage{MnSymbol}
\usepackage{hyperref}
\usepackage{tikz}
\usetikzlibrary{arrows,automata}
\usepackage{enumitem}
\usepackage{mathrsfs}
\usepackage{pst-plot}
\usepackage{auto-pst-pdf}
\usepackage[all]{xy}
\usepackage{stackrel}
\usepackage{listings}
\lstdefinelanguage{GAP}{%
  morekeywords={%
    Assert,Info,IsBound,QUIT,%
    TryNextMethod,Unbind,and,break,%
    continue,do,elif,%
    else,end,false,fi,for,%
    function,if,in,local,%
    mod,not,od,or,%
    quit,rec,repeat,return,%
    then,true,until,while%
  },%
  sensitive,%
  morecomment=[l]\#,%
  morestring=[b]",%
  morestring=[b]',%
}[keywords,comments,strings]
\usepackage[T1]{fontenc}
\usepackage{xcolor}
\usepackage{enumitem}
\lstset{
  basicstyle=\small\ttfamily,
  keywordstyle=\color{red},
  stringstyle=\color{blue},
  commentstyle=\color{green!70!black},
  columns=fullflexible,
}
\newtheorem{thm}{Theorem}[section]
\newtheorem{cor}[thm]{Corollary}

\newtheorem{lem}[thm]{Lemma}
\newtheorem{prop}[thm]{Proposition}

\newcommand{\abs}[1]{\left\vert#1\right\vert}

\def\pv#1{\ensuremath{\mathsf{#1}}}



\newcommand{\DDet}{\mathop{\mathrm{Det}}\nolimits}

\usepackage{caption}

\makeatletter
\newcommand{\mylabel}[2]{#2\def\@currentlabel{#2}\label{#1}}
\makeatother

\begin{document}
\title[The Layered Catalan Monoids: Structure and Determinants]{The Layered Catalan Monoids: Structure and Determinants}
\author{M.H. Shahzamanian}
\address{M.H. Shahzamanian\\ CMUP, Departamento de Matemática, Faculdade de Ciências, Universidade do Porto, Rua do Campo Alegre s/n, 4169--007 Porto (Portugal).}
\email{m.h.shahzamanian@fc.up.pt}
\thanks{Mathematics Subject Classification 2020: 20M25, 16L60, 16S36.\\
Keywords and phrases: Frobenius algebra; semigroup determinant; paratrophic determinant; semigroup algebra.}

\begin{abstract}
In this paper, we introduce and study a class of monoids, called Layered Catalan Monoids (\( {LC}_n \)), which satisfy the structural conditions for $\ll$-smoothness as defined in~\cite{Sha-Det2}. These monoids are defined by specific identities inspired by Catalan monoids. We establish their canonical forms and compute their determinant, proving that it is non-zero for \(1 \leq n \leq 7\) but vanishes for \(n \geq 8\).
\end{abstract}
\maketitle


\section{Introduction}

In the 1880s, Dedekind introduced the concept of the group determinant for finite groups, which was later studied in depth with Frobenius. 
At the same time, Smith approached the concept from a different perspective, focusing on the determinant of a \(G \times G\) matrix where the entry at position \((g, h)\) is \(x_{gh}\), with \(G\) being a finite group and \(x_k\) as variables for each \(k \in G\) \cite{Smith}. 
This concept has since been extended to finite semigroups, with various researchers contributing to its development \cite{Lindstr, Wilf, Wood}. 
A notable application of the semigroup determinant arises in coding theory. The extension of the MacWilliams theorem from codes over finite fields to chain rings relies on the non-zero determinant property of certain semigroups. Specifically, the theory of linear codes over finite Frobenius rings demonstrates this extension property, where the semigroup determinant plays a key role \cite{Wood-Duality}.  


In \cite{Ste-Fac-det}, Steinberg examined the factorization of the semigroup determinant for commutative semigroups, proving that it is either zero or factors into linear polynomials. Building on this, in \cite{Sha-Det}, the determinant of semigroups in the pseudovariety \(\pv{ECom}\) is studied. This pseudovariety consists of semigroups whose idempotents commute. As shown by Ash, \(\pv{ECom}\) is generated by finite inverse semigroups--a broader class than the semigroups with central idempotents considered in \cite{Ste-Fac-det}.

More recently, in \cite{Sha-Det2}, the study was extended to semigroups beyond \(\pv{ECom}\), focusing on finite semigroups possessing pairs of non-commutative idempotents. The goal was to determine when the determinant is non-zero and to explore its factorization. The focus was on a class of semigroups that, while not in \(\pv{ECom}\), satisfy certain structural properties such as $\ll$-smoothness, $\ll$-transitivity, and being a singleton-rich monoid. These structural conditions were introduced and explored in \cite{Sha-Det2}.  

However, a fundamental question remains open: Are there other natural classes of semigroups or monoids that satisfy these conditions, and what are their determinant properties?

In this paper, we introduce and investigate a class of monoids, which we call Layered Catalan Monoids (\(LC_n\)), satisfying the conditions for $\ll$-smoothness as defined in \cite{Sha-Det2}. These monoids are characterized by specific algebraic identities, including those defining Catalan monoids, with some exceptions. An integer from \(\mathbb{N}\) represents the number of generators of the monoid, excluding the identity element. (Including the identity, the total number of generators is \(n+1\).)

We determine the canonical form of the elements in \(LC_n\), which is essential for verifying the $\ll$-smoothness property. Using the theoretical framework from \cite{Sha-Det2}, we then compute the determinant of \(LC_n\). Our main result shows that the determinant is non-zero for \(1 \leq n \leq 7\) but becomes zero for \(n \geq 8\). This marks an interesting transition point in the determinant behavior of these monoids.  

The paper is structured as follows. In Section~2, we provide preliminaries on semigroup identities and determinant properties. In Section~3, we define Layered Catalan Monoids and study their structural properties, leading to the proof that they are $\ll$-smooth. Finally, in Section~4, we compute the determinant of \(LC_n\) and discuss its implications.  


\section{Preliminaries}



\subsection{Identities}

For standard notation and terminology related to semigroups and monoids, we refer the reader to~\cite[Chapter 5]{Alm}, ~\cite[Chapters 1–3]{Cli-Pre}, and ~\cite[Appendix A]{Rho-Ste}.  
For identities, we refer to~\cite{MR912694}.

Let $X$ be a countably infinite set. 
We call $X$ an alphabet and each element $x\in X$ a letter.
Let $X^+$ be the set of all finite, non-empty words $x_1\cdots x_n$ with $x_1,\ldots,x_n\in X$.
The set $X^+$ forms a semigroup under concatenation which is called the free semigroup over $X$.
The monoid \( X^* = (X^+)^1 \) is called the \emph{free monoid} over \( X \), with the empty word as its identity element.

Let $t = x_1\cdots x_n$ be a word of $X^+$ with $x_1,\ldots, x_n \in X$. 
The set $\{x_1,\ldots, x_n\}$ is called the content of $t$ and is denoted $c(t)$
while the number $n$ is referred to as the length of $t$ and is denoted $\abs{t}$. 
If $x \in c(t)$, we say that a letter $x$ occurs in a word $t$. 
We say that a word $s \in X^+$ occurs in $t$ if $t = t_1st_2$ for some $t_1, t_2 \in X^*$.
Let $u=u_1\cdots u_m$ be a word in $X^*$ with $u_1,\ldots,u_m\in X$.
We say that $u$ is a subword of the word $t$, 
if $t$ can be written $t = u'_0u_1u'_1\cdots u_mu'_m$ for some words $u'_0,u'_1,\ldots,u'_m \in X^*$. 
For a subset $Y$ of the set $X$, let $t_Y$ be the longest subword of $t$ with $c(t_Y)\subseteq Y$.

An identity is an expression $t_1=t_2$ with $t_1,t_2\in X^*$.
Let $M$ be a monoid.
We say that the identity $t_1=t_2$ holds in $M$ or $M$ satisfies the identity $t_1=t_2$ if $\phi(t_1) = \phi(t_2)$ for every homomorphism $\phi\colon X^*\rightarrow M$. 


\subsection{Determinant of a semigroup}
For standard notation and terminology relating to finite dimensional algebras, the reader is referred to \cite{Assem-Ibrahim, Benson}.

A based algebra is a finite-dimensional complex algebra \( A \) equipped with a distinguished basis \( B \). We often refer to the pair \( (A, B) \) to emphasize both the algebra and its basis.  
The multiplication in \( A \) is determined by the structure constants with respect to \( B \), given by the equations  
\[
bb' = \sum\limits_{b''\in B} c_{b'',b,b'} b'',
\]
where \( b, b' \in B \) and \( c_{b'',b,b'} \in \mathbb{C} \).  

Let \( X_B = \{ x_b \mid b \in B \} \) be a set of variables in bijection with \( B \). These structure constants can be represented in a matrix called the Cayley table, which is a \( B \times B \) matrix with entries in the polynomial ring \( \mathbb{C}[X_B] \), defined by  
\[
C(A,B)_{b,b'} = \sum\limits_{b''\in B} c_{b'',b,b'} x_{b''}.
\]  
The determinant of this matrix, denoted by $\theta_{(A,B)}(X_B)$, is either identically zero or a homogeneous polynomial of degree $\abs{B}$.


Let \( S \) be a finite semigroup. The semigroup \( \mathbb{C} \)-algebra \( \mathbb{C}S \) consists of all formal sums  
\[
\sum\limits_{s\in S} \lambda_s s,
\]
where \( \lambda_s \in \mathbb{C} \) and \( s \in S \), with multiplication defined by the formula  
\[
\left( \sum\limits_{s\in S} \lambda_s s \right) \cdot \left( \sum\limits_{t\in S} \mu_t t \right) = \sum\limits_{u=st\in S} \lambda_s \mu_t u.
\]
Note that \( \mathbb{C}S \) is a finite-dimensional \( \mathbb{C} \)-algebra with basis \( S \).  

If we set \( A = \mathbb{C}S \) and \( B = S \), then the Cayley table \( C(S) = C(\mathbb{C}S, S) \) is the \( S \times S \) matrix over \( \mathbb{C}[X_S] \) with  
\[
C(S)_{s,s'} = x_{ss'},
\]
where \( X_S = \{ x_s \mid s \in S \} \) is a set of variables in bijection with \( S \).  
We denote the determinant \( \DDet C(\mathbb{C}S, S) \) by \( \theta_S(X_S) \) and refer to it as the (Dedekind-Frobenius) \emph{semigroup determinant} of \( S \).  
If the semigroup \( S \) is fixed, we often write \( X \) instead of \( X_S \).  

For more details on this topic, we refer the reader to \cite{Frobenius1903theorie}, \cite[Chapter~16]{Okn}, and \cite{Ste-Fac-det}.  


The \emph{contracted semigroup algebra} of a semigroup \( S \) with a zero element \( 0 \) over the complex numbers is defined as  
\[
\mathbb{C}_0S = \mathbb{C}S / \mathbb{C}0;
\]
note that \( \mathbb{C}0 \) is a one-dimensional two-sided ideal.  
This algebra can be thought of as having a basis consisting of the non-zero elements of \( S \) and having multiplication that extends that of \( S \), but with the zero of the semigroup being identified with the zero of the algebra.  

The \emph{contracted semigroup determinant} of \( S \), denoted by \( \widetilde{\theta}_S \), is the determinant of  
\[
\widetilde{C}(S) = C(\mathbb{C}_0S, S\setminus\{0\}),
\]
where  
\[
\widetilde{C}(S)_{s,t} =
\begin{cases}
    x_{st}, & \text{if } st \neq 0, \\
    0, & \text{otherwise}.
\end{cases}
\]
Let \( \widetilde{X} = X_{S \setminus \{0\}} \) if \( S \) is understood.  

According to Proposition 2.7 in~\cite{Ste-Fac-det} (the idea mentioned in~\cite{Wood}), there is a connection between the contracted semigroup determinant and the semigroup determinant of a semigroup \( S \) with a zero element.  
There is a \( \mathbb{C} \)-algebra isomorphism between the \( \mathbb{C} \)-algebra \( \mathbb{C}S \) and the product algebra \( \mathbb{C}_0S \times \mathbb{C}0 \), which sends \( s \in S \) to \( (s, 0) \).  
Put \( y_s = x_s - x_0 \) for \( s \neq 0 \) and let \( Y = \{ y_s \mid s \in S \setminus \{0\} \} \).  
Then  
\[
\theta_S(X) = x_0 \widetilde{\theta}_S(Y).
\]  
Therefore, \( \widetilde{\theta}_S(\widetilde{X}) \) can be obtained from \( \theta_S(X) / x_0 \) by replacing \( x_0 \) with 0.  


\section{Layered Catalan monoid}

The \textit{Catalan monoid}, denoted $C_n$ for each $n \geq 1$, is defined as the monoid generated by the elements $a_1, a_2, \ldots, a_n$	and an identity element $1$, subject to the following relations:
\begin{equation}\label{Cn-relations}
\begin{aligned}
    &a_i^2 = a_i, &&\quad \forall\ 1 \leq i \leq n,\\
    &a_i a_j = a_j a_i, &&\quad \forall\ 1 \leq i, j \leq n, \ \text{if } \abs{i - j} > 1,\\
    &a_i a_{i+1} a_i = a_{i+1} a_i a_{i+1} = a_{i+1} a_i, &&\quad \forall\ 1 \leq i < n.
\end{aligned}
\end{equation}
The monoid \( C_n \) is isomorphic to the monoid of transformations \( f \) on the set \(\{1, 2, \ldots, n\}\) that are both decreasing and order-preserving. Specifically, \( f(i) \leq i \) for all \( i \), and \( f(i) \leq f(j) \) whenever \( i < j \) (see \cite{Sol-MR1406781}). 

Moreover, the cardinality of \( C_n \) is equal to the \( n \)-th Catalan number, given by 
\[
\frac{1}{n+1} \binom{2n}{n},
\]
as established in \cite{Hig-MR1198412}.

We define a class of monoids closely related to the Catalan monoids, called the \textit{Layered Catalan monoid}, denoted by ${LC}_n$ for each \( n \geq 1 \). The monoid \( {LC}_n \) is generated by the elements \( a_1, a_2, \ldots, a_n \) along with the identity element \( 1 \), which represents the empty word. It is defined by the relations in~\eqref{Cn-relations}, except for the relation \( a_1 a_n = a_n a_1 \), along with the relation  
\[
a_n a_1 a_n = a_1 a_n a_1 = a_1 a_n,
\]  
which we denote as (1'),
and the additional relations:  
\begin{equation}\label{LCn-relations}
\begin{aligned}
    a_i a_{(i+1) \bmod n} a_{(i+2) \bmod n} &= a_i a_{(i+2) \bmod n}, \quad &&\forall\ 1 \leq i \leq n.
\end{aligned}
\end{equation}
Note that the relation  
\[
a_n a_1 a_n = a_1 a_n a_1 = a_1 a_n
\]  
is not explicitly included in \eqref{Cn-relations} and must be considered separately.

This additional relation simplifies interactions involving three consecutive generators by collapsing intermediate elements.

\begin{lem}\label{relation3}
In the monoid \( LC_n \), where \( n \geq 1 \),
the following identities hold:
\begin{equation}\label{LCn-relations-res1}
\begin{aligned}
    a_i a_{i+2} &= a_i a_{i+1} a_{i+2} = a_i a_{i+2} a_{i+1}\\
    &= a_{i+1} a_i a_{i+2} = a_{i+1} a_{i+2} a_i\\ &= a_{i+2} a_i a_{i+1} = a_{i+2} a_{i+1} a_i, \quad \forall\ 1 \leq i \leq n.
\end{aligned}
\end{equation}
Note that indices are taken modulo \( n \), meaning that if \( i+1 > n \) or \( i+2 > n \), we interpret them as \( (i+1) \bmod n \) and \( (i+2) \bmod n \), respectively.
\end{lem}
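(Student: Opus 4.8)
The plan is to fix the triple of cyclically consecutive generators $\alpha=a_i$, $\beta=a_{i+1}$, $\gamma=a_{i+2}$ (indices mod $n$) and to rewrite each of the six length-three products to $\alpha\gamma=a_ia_{i+2}$ using only the relations these three letters already satisfy in $LC_n$. Those relations are: idempotency $\alpha^2=\alpha$, $\beta^2=\beta$, $\gamma^2=\gamma$; the two braid relations $\alpha\beta\alpha=\beta\alpha\beta=\beta\alpha$ and $\beta\gamma\beta=\gamma\beta\gamma=\gamma\beta$ coming from \eqref{Cn-relations} (for the wrap-around triples $i=n-1$ and $i=n$ one of these braids is exactly relation (1')); the defining relation $\alpha\beta\gamma=\alpha\gamma$ of \eqref{LCn-relations}; and, when $n\ge 4$, the commuting relation $\alpha\gamma=\gamma\alpha$, which is valid because the only commuting relation dropped in passing from $C_n$ to $LC_n$ is $a_1a_n=a_na_1$, and for $n\ge 4$ the unordered pair $\{i,\,i+2 \bmod n\}$ is never $\{1,n\}$.

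For the generic case $n\ge 4$ I would first record the immediate reductions. The identity $\alpha\beta\gamma=\alpha\gamma$ is given. Right-multiplying it by $\alpha$ and using $\gamma\alpha=\alpha\gamma$, $\alpha\beta\alpha=\beta\alpha$ and $\alpha^2=\alpha$ turns $\alpha\beta\gamma\alpha=\alpha\gamma\alpha$ into $\beta\alpha\gamma=\alpha\gamma$; commuting then gives $\beta\gamma\alpha=\beta\alpha\gamma=\alpha\gamma$ and reduces $\gamma\alpha\beta$ to $\alpha\gamma\beta$. This disposes of $\alpha\beta\gamma$, $\beta\alpha\gamma$, $\beta\gamma\alpha$ and leaves the two reversed words $\alpha\gamma\beta$ (which will also handle $\gamma\alpha\beta$) and $\gamma\beta\alpha$.

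These two reversed products are the crux, since neither telescopes directly; this is the step I expect to be the main obstacle. The trick I would use is to manufacture an auxiliary identity: left-multiplying $\alpha\beta\gamma=\alpha\gamma$ by $\gamma$ and using $\gamma\alpha=\alpha\gamma$, $\gamma^2=\gamma$ yields $\alpha\gamma\beta\gamma=\alpha\gamma$. On the other hand the second braid relation $\gamma\beta\gamma=\gamma\beta$, multiplied on the left by $\alpha$, gives $\alpha\gamma\beta\gamma=\alpha\gamma\beta$; comparing the two shows $\alpha\gamma\beta=\alpha\gamma$, hence also $\gamma\alpha\beta=\alpha\gamma$. Finally, to reach $\gamma\beta\alpha$ I would feed the first braid $\alpha\beta\alpha=\beta\alpha$ through a left multiplication by $\gamma$: we have $\gamma\beta\alpha=\gamma(\alpha\beta\alpha)=\gamma\alpha\beta\alpha=\alpha\gamma\beta\alpha=(\alpha\gamma\beta)\alpha=\alpha\gamma\alpha=\alpha\gamma$, using commuting and the identity just obtained. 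Thus all six products equal $\alpha\gamma$ when $n\ge 4$.

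The remaining work is the small cases, where commuting fails or the three letters collapse. For $n=3$ the generators $a_i$ and $a_{i+2}$ are cyclically adjacent, but wrap-around now makes extra instances of \eqref{LCn-relations} available (for example $a_{i+1}a_{i+2}a_i=a_{i+1}a_i$); computing $\alpha\beta\gamma\alpha$ two ways, as $(\alpha\beta\gamma)\alpha=\alpha\gamma\alpha=\alpha\gamma$ and as $\alpha(\beta\gamma\alpha)=\alpha\beta\alpha=\beta\alpha$, gives $\alpha\gamma=\beta\alpha$, and the cyclic automorphism $a_j\mapsto a_{j+1}$ of $LC_3$ promotes this to $\alpha\gamma=\beta\alpha=\gamma\beta$, from which the six equalities follow at once. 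For $n\le 2$ the letters $a_i$ and $a_{i+2}$ coincide and the statement degenerates to a consequence of idempotency together with a single instance of \eqref{LCn-relations}. The bookkeeping needed to confirm that the braid and commuting relations (including the wrap-around relation (1')) really are available for every triple is precisely what forces the separate treatment of $n=3$.
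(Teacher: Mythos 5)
Your proposal is correct, and in the generic case it is essentially the paper's argument: for $n\ge 4$ both proofs rewrite the six three-letter products down to $a_ia_{i+2}$ using the braid relations, the defining relation $a_ia_{i+1}a_{i+2}=a_ia_{i+2}$, idempotency, and the commutation $a_ia_{i+2}=a_{i+2}a_i$; your detour through the auxiliary word $\alpha\gamma\beta\gamma$ (compared two ways to get $\alpha\gamma\beta=\alpha\gamma$) plays the same role as the paper's insertion $a_{i+2}a_{i+1}=a_{i+2}a_{i+1}a_{i+2}$, so the difference there is only bookkeeping. The genuine divergence is your case split on $n$, and it is a real improvement rather than a cosmetic one. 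The paper runs its single computation for all $n\ge 1$ and invokes $a_ia_{i+2}=a_{i+2}a_i$ without restriction; but for $n=3$ one has $i+2\equiv i-1\pmod 3$, so $a_i$ and $a_{i+2}$ are cyclically adjacent and this commutation fails in $LC_3$ --- the paper itself lists $a_1a_3$ and $a_3a_1$ as distinct among the eight elements of $LC_3$ and states that $a_3a_1\neq a_1a_3$. Hence the paper's proof as written only covers $n\ge 4$, even though the lemma is applied to $n=3$ immediately afterwards. Your separate $n=3$ argument --- evaluating $\alpha\beta\gamma\alpha$ two ways via the wrap-around instance $a_{i+1}a_{i+2}a_i=a_{i+1}a_i$ and the absorption $\alpha\gamma\alpha=\alpha\gamma$ (available from (1') or the braid in each of the three rotations), yielding $a_ia_{i+2}=a_{i+1}a_i$, then propagating by the cyclic shift, which is indeed an automorphism of $LC_3$ since it permutes the defining relations --- closes exactly this gap, and the six products do all reduce to $a_ia_{i+2}$ from it. Your remark on $n\le 2$ is likewise the honest literal reading: for $n=2$ relation~\eqref{LCn-relations} gives $a_1a_2a_1=a_1$, which together with (1') collapses the claimed identities (and in fact sits uneasily with the four-element description of $LC_2$ given later), a degeneracy on which the paper's proof is silent. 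In short: same method for $n\ge 4$, but your handling of the small moduli supplies a correctness argument the paper's own proof lacks.
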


\begin{proof}
By relation \eqref{LCn-relations}, we have \(a_i a_{i+2} = a_i a_{i+1} a_{i+2}\).

We prove other equalities using the relations in (1') and \eqref{LCn-relations}. 

First, consider \( a_i a_{i+2} a_{i+1} \):
\[a_i a_{i+2} a_{i+1} = a_i a_{i+2} a_{i+1} a_{i+2} = a_{i+2} a_i a_{i+1} a_{i+2} = a_{i+2} a_i a_{i+2} = a_i a_{i+2} a_{i+2}=  a_i a_{i+2}\]

Now, consider \( a_{i+1} a_i a_{i+2} \):
\[a_{i+1} a_i a_{i+2} = a_i a_{i+1} a_i a_{i+2} = a_i a_{i+1} a_{i+2} a_i  = a_i a_{i+2} a_i = a_i a_i a_{i+2} = a_i a_{i+2}.\]

As \( a_i a_{i+2}= a_{i+2} a_i\), we have  \(a_{i+1} a_{i+2} a_i = a_{i+1} a_i a_{i+2} = a_i a_{i+2}\) and 
\(a_{i+2} a_i a_{i+1} = a_i a_{i+2} a_{i+1} = a_i a_{i+2}\).

Also, as \(a_{i+2} a_{i+1} a_{i+2} = a_{i+2} a_{i+1}\) and \(a_{i+1} a_{i+2} a_i = a_i a_{i+2}\), 
we have \[a_{i+2} a_{i+1} a_i = a_{i+2} a_{i+1} a_{i+2} a_i = a_{i+2}a_i a_{i+2}=a_i a_{i+2}.\]
\end{proof}

Let \(\Lambda\) be the set of relations given by (1'), \eqref{LCn-relations}, and \eqref{LCn-relations-res1}.  
We construct a canonical form for words over a finite alphabet \( A = \{a_1, \ldots, a_n\} \), where \( n \geq 1 \), with respect to \(\Lambda\). In fact, we manage to show that the equality \(u = v\) for distinct canonical forms does not follow from \(\Lambda\).

Let \( W = w_1 w_2 \cdots w_{\abs{W}} \) be a word over \( A \), where \( w_1, \ldots, w_{\abs{W}} \in A \). Suppose that \(W\) satisfies the following condition:
\begin{enumerate}
\item[\mylabel{itm:star}{($\star$)}] There exists an integer \( 1 \leq r \leq n \) such that neither \( a_{(r+1) \bmod n} \) nor \( a_{(r+2) \bmod n} \) belongs to \( c(W) \). 
\end{enumerate}
In this case, we partition the set \( c(W) \) into disjoint subsets:  
\[
c(W) = C_1 \cup C_2 \cup \cdots \cup C_m
\]
where each subset \( C_j = \{ a_{j_1}, a_{j_2}, \ldots, a_{j_{\abs{C_j}}} \} \) satisfies the following conditions:
\begin{enumerate}
    \item For each \( 1 \leq \alpha \leq \abs{C_1} - 1 \), we have either \( 1_{\alpha+1} = 1_{\alpha} + 1 \bmod n \) or \( 1_{\alpha+1} = 1_{\alpha} + 2 \bmod n \);
    \item For each \( 1 < j \leq m \) and \( 1 \leq \alpha \leq \abs{C_j} - 1 \), we have either \( j_{\alpha+1} = j_{\alpha} + 1 \) or \( j_{\alpha+1} = j_{\alpha} + 2 \);
    \item For every \( 1 \leq j \leq m-1 \), the last index of \(C_j\) satisfies \( j_{\abs{C_j}} < (j+1)_1 - 2 \), where \((j+1)_1\) denotes the first index of the subset \(C_{j+1}\).
\end{enumerate}
If there exists an integer \( 1 \leq \alpha \leq \abs{C_1} - 1 \) such that \( 1_{\alpha+1} < 1_{\alpha} \), we adjust the indices of \(A\) to maintain order by shifting them as follows:
\[
i \mapsto i - 1_{1} + 1\bmod n.
\]
After shifting, we ensure that the largest integer \( r \) in \( c(W) \) satisfies the condition that neither \( a_{(r+1) \bmod n} \) nor \( a_{(r+2) \bmod n} \) belongs to \( c(W) \).

A canonical form for \( W \) is constructed by applying a reduction rule, which operates under the assumption that a total order is fixed for the underlying alphabet \( A \).  After obtaining our canonical form, we shift the indices of the elements of \( A \) back to the original form: 
\[
i \mapsto i + 1_{1} - 1\bmod n.
\]
Afterward, we discuss the construction of a canonical form for words \( W \) that do not satisfy the given condition \ref{itm:star}.

To construct the canonical form, we initialize two variables:  
\begin{itemize}
    \item \( Y \): an initially empty word, which will store the canonical form.
    \item \( X \): initialized as \( W \), the input word, which is reduced step by step.
\end{itemize}

\section*{Reduction Process}
The reduction process involves repeatedly applying the reduction rule until \( X \) becomes the empty word. At the end of this process, \( Y \) will contain the canonical form of \( W \). 

As part of the Reduction Rule, the process Simplify \( Y \) is applied whenever \( Y \) is updated. The rules for simplifying \( Y \) are detailed below.

\section*{Simplify \( Y \)}
\begin{enumerate}
\item[(a)] If \( Y \) matches :
    \[
    Y = a_r a_{r+1} a_{r+2}Y', \quad Y = a_r a_{r+2} a_{r+1}Y',
    \]
    update \( Y \) as:
    \(
    Y := a_r a_{r+2}Y'.
    \)
\item[(b)] If \( Y \) matches:
    \[
    Y = a_r a_{r+2} a_{r+3}Y', \quad Y = a_r a_{r+3} a_{r+2}Y',
    \]
    update \( Y \) as:
    \(
    Y := a_r a_{r+1} a_{r+3}Y'.
    \)
\item[(c)] If \( Y \) matches:
    \[
    Y = a_r a_{r-1}a_{r+1}Y', \quad Y = a_{r-1} a_r a_{r+1}Y',
    \]
    update \( Y \) as:
    \(
    Y := a_{r-1} a_{r+1} Y'.
    \)
\item[(d)] If \( Y \) matches:
    \[
    Y = a_r a_{r-1}a_{r+2}Y', 
    \]
    update \( Y \) as:
    \(
    Y := a_{r-1} a_ra_{r+2} Y'.
    \)    
\item[(e)] If \( Y \) matches:
    \[
    Y = a_{r-1} a_r a_{r+1} a_{r+2}Y', \quad Y = a_{r-1} a_r a_{r+2} a_{r+1}Y',
    \]
    update \( Y \) as:
    \(
    Y := a_{r-1} a_r a_{r+2} Y'.
    \)
\item[(f)] If \( Y \) matches:
    \[
    Y = a_r a_{r-1}a_{r+1} a_{r+2}Y', \quad Y = a_r a_{r-1}a_{r+2} a_{r+1}Y',
    \]
    update \( Y \) as:
    \(
    Y := a_r a_{r-1} a_{r+2} Y'.
    \)
\item[(g)] If \( Y \) matches:
    \[
    Y = a_r a_{r-1}a_{r+2} a_{r+3}Y', \quad Y = a_r a_{r-1}a_{r+3} a_{r+2}Y',
    \]
    \[
    Y = a_{r-1} a_r a_{r+2} a_{r+3}Y', \quad Y = a_{r-1} a_r a_{r+3} a_{r+2}Y',
    \]
    update \( Y \) as:
    \(
    Y := a_{r-1} a_{r+1} a_{r+3} Y'.
    \)
\end{enumerate}

\section*{Reduction Rule \((\text{rr})\)}
The Reduction Rule operates as follows:
\begin{enumerate}

    \item Identify \( r \), the largest integer such that \( a_r \in c(X) \), where \( X \) is the current word being reduced.
    \item Depending on the presence of \( a_{r-1} \) or \( a_{r-2} \) in \( c(X) \), one of three cases applies:
    \begin{itemize}
        \item \textbf{Case 1:} If \( a_{r-1} \notin c(X) \):
        \begin{itemize}
            \item Update \( X := X_{c(X) \setminus \{a_r\}} \) and \( Y := a_r Y \).
            \item Simplify \( Y \).
        \end{itemize}
        
        \item \textbf{Case 2:} If \( a_{r-1} \in c(X) \) and \( a_{r-2} \notin c(X) \):
        \begin{itemize}
            \item Check if there exist \( 1 \leq r_1 < r_2 \leq n \) such that \( w_{r_1} = a_r \) and \( w_{r_2} = a_{r-1} \):
            \begin{itemize}
                \item If yes, update \( X := X_{c(X) \setminus \{a_r, a_{r-1}\}} \), \( Y := a_r a_{r-1} Y \).
                \item Otherwise, update \( X := X_{c(X) \setminus \{a_r, a_{r-1}\}} \), \( Y := a_{r-1} a_r Y \).
            \end{itemize}
            \item Simplify \( Y \).
        \end{itemize}

        \item \textbf{Case 3:} If \( a_{r-1}, a_{r-2} \in c(X) \):
        \begin{itemize}
            \item Update \( X := X_{c(X) \setminus \{a_r, a_{r-1}\}} \) and \( Y := a_r Y \).
            \item Simplify \( Y \).
        \end{itemize}
    \end{itemize}
\end{enumerate}

We prove that the equation \( W = XY \) holds at the end of every step of the reduction process.

\textbf{Initial State}:\\
Before any reduction step is applied:
\begin{itemize}
    \item \( W = X \), since \( Y \) is initialized as the empty word.
\end{itemize}
Clearly, \( W = XY \) holds in this state.

\textbf{Simplify \( Y \)}:\\
We provide a detailed verification for items (d) and (g) below. The remaining cases can be verified in a similar manner using relations~(1') and~\eqref{LCn-relations-res1}.

First, suppose that \( Y \) matches to the following forms
    \[
    Y = a_r a_{r-1}a_{r+2}Y'', 
    \]
    for some word \( Y'' \). Let
    \(
    Y' = a_{r-1} a_ra_{r+2} Y''.
    \)  
By relations~(1') and~\eqref{LCn-relations-res1}, we have
\begin{align*}
Y &= a_r a_{r-1}a_{r+2}Y''=a_r a_{r+2}a_{r-1}Y''=a_ra_{r+1} a_{r+2}a_{r-1}Y''=a_ra_{r+1} a_{r-1}a_{r+2}Y''\\
  &=a_{r+1} a_{r-1}a_{r+2}Y''=a_{r+1} a_{r-1}a_ra_{r+2}Y''= a_{r-1}a_{r+1}a_ra_{r+2}Y''\\
  &= a_{r-1}a_ra_{r+2}Y''=Y'.
\end{align*}

			Now, suppose that \( Y \) matches to one of the following forms:
			\[
			Y = a_r a_{r-1}a_{r+2} a_{r+3}Y'', \quad Y = a_r a_{r-1}a_{r+3} a_{r+2}Y'',
			\]
			\[
			Y = a_{r-1} a_r a_{r+2} a_{r+3}Y'', \quad Y = a_{r-1} a_r a_{r+3} a_{r+2}Y'',
			\]
			for some word \( Y'' \). Let \( Y' = a_{r-1} a_{r+1} a_{r+3} Y'' \).

			We assume that \( Y = a_r a_{r-1}a_{r+2} a_{r+3}Y'' \). Again by relations~(1') and~\eqref{LCn-relations-res1}, we have:
			\begin{align*}
			Y &= a_r a_{r-1}a_{r+2} a_{r+3}Y'' = a_r a_{r-1}a_r a_{r+2} a_{r+3}Y'' \\
			&= a_r a_{r-1}a_r a_{r+1}a_{r+2} a_{r+3}Y'' = a_r a_{r-1}a_r a_{r+1} a_{r+3}Y'' \\
			&= a_r a_{r-1}a_{r+1} a_{r+3}Y'' = a_{r-1}a_{r+1} a_{r+3}Y''=Y'.
			\end{align*}
			Similarly, for the other cases, we conclude that \( Y' = Y \).

\textbf{Reduction Step}:\\
Suppose \( W = XY \) holds after \( k \) reduction steps. For the \( (k+1) \)-th step:
\begin{itemize}
    \item Let \( r \) be the largest integer such that \( a_r \in c(X) \).
    \item Let \( X' \) and \( Y' \) represent the updated variables after this step.
\end{itemize}
We now verify that $W=X'Y'$ in each of the following cases:
    \begin{itemize}
        \item \textbf{Case 1}: If \( a_{r-1} \notin c(X) \):\\
        		Since \( a_{r-1} \notin c(X) \), by relation~(1'), 
    			we know \( a_r^2 = a_r \) and the letter \( a_r \) commutes with all elements of \( c(X) \). 
    			Thus, the reduction preserves \( X = X'a_r \) and, thus, we have \( XY = X'Y' \).    
        \item \textbf{Case 2}: If \( a_{r-1} \in c(X) \) and \( a_{r-2} \notin c(X) \):\\
			The letter \( a_r \) commutes with all elements of \( c(X) \) except \( a_{r-1} \). 
			Since \( a_{r-2} \notin c(X) \), by relation~(1'), 
			the letter \( a_{r-1} \) commutes with all elements of \( c(X) \) except \( a_r \). 
			Therefore, we can write:
			\[
				X = X_{c(X) \setminus \{a_r, a_{r-1}\}} X_{\{a_r, a_{r-1}\}}.
			\]
			Now, using relation~(1'), \( a_r a_{r-1} a_r = a_{r-1} a_r a_{r-1} = a_r a_{r-1} \), we conclude:
			\begin{itemize}
    				\item If there exist \( 1 \leq r_1 < r_2 \leq n \) such that \( w_{r_1} = a_r \) and \( w_{r_2} = a_{r-1} \):
    					\begin{itemize}
       				 \item Then \( X = X_{c(X) \setminus \{a_r, a_{r-1}\}} a_r a_{r-1} \).
    					\end{itemize}
    				\item Otherwise:
    					\begin{itemize}
        				\item \( X = X_{c(X) \setminus \{a_r, a_{r-1}\}} a_{r-1} a_r \).
    					\end{itemize}
				\end{itemize}
			Thus, in both subcases, \( XY = X'Y' \) is preserved.
			
        		Update \( Y := Y' \).

        \item \textbf{Case 3}: If \( a_{r-1}, a_{r-2} \in c(X) \):\\
        First, replace any \( a_{r-2}^2 \) in \( X \) with \( a_{r-2} \) using relation~(1').
		Redefine \( X' \) to reflect the updated word.
        Since \( a_{r-2} \in c(X) \), we can decompose \( X' \) as:
		\[
		X' = X^{(1)} a_{r-2} X^{(2)} \cdots a_{r-2} X^{(m)},
		\]
		where \( X^{(i)} \) are subwords of \( X' \) such that \( a_{r-2} \notin c(X^{(i)}) \) for all \( i \). 
		The subwords \( X^{(1)} \) and \( X^{(m)} \) may potentially be empty.
		
		As \( a_{r-2} \notin c(X^{(i)}) \), we update \( X^{(i)} \) as 
		\[ X'^{(i)} {X^{(i)}}_{c(X^{(i)}) \setminus \{a_r, a_{r-1}\}}, \]
		for every \( 2 \leq i \leq m \), where \( c(X'^{(i)}) \subseteq \{a_r, a_{r-1}\} \). 
		This means we push forward to the left all occurrences of \( a_r \) and \( a_{r-1} \) in \(X^{(i)}\), 
		ensuring that \( a_r \) and \( a_{r-1} \) do not cross each other. 
		This reordering is achieved using relation~\eqref{Cn-relations}. 
		The word \( X'^{(i)} \) may be empty, or consist of \( a_r \), \( a_{r-1} \), \( a_{r-1}a_r \), or \( a_ra_{r-1} \).
		Similarly, we update \( X^{(1)} \) by pushing all occurrences of \( a_r \) and \( a_{r-1} \) to the right within the subword, 
		ensuring the same ordering constraints hold.
		
		For \( 2 \leq i \leq m \), if \( X^{(i)} \) is either \( a_{r-1}a_r \) or \( a_ra_{r-1} \), 
		then by using relation~\eqref{LCn-relations-res1}, we obtain:
		\[
		a_{r-2}X^{(i)} = a_{r-2}a_r.
		\]
		Similarly, if \( X^{(1)} \) is either \( a_{r-1}a_r \) or \( a_ra_{r-1} \), then:
		\[
		X^{(1)}a_{r-2} = a_{r-2}a_r.
		\]


		Considering these updates, 
		we refine \( X' \) such that \( X'^{(i)} \) may be empty or consist of \( a_r \) or \( a_{r-1} \) (for \( 1 \leq i \leq m \)).
		It follows that all letters \( a_{r-1} \) of $X$ connect to the letter \( a_{r-2} \) in $X$.
		
		If none letter of \( X'^{(i)} \) is equal to \( a_{r-1} \), 
		then the letter \( a_r \) commutes with all letters of \( X' \), and the result follows.

		Now, suppose that \( a_{r-1} \in c(X') \). Since \( a_r \in c(X') \), 
		we move the letter \( a_r \) to the closest occurrence of \( a_{r-1} \). 
		As the letter \( a_{r-1} \) is already connected to the letter \( a_{r-2} \), 
		by relation~\eqref{LCn-relations-res1} we can omit the letter \( a_{r-1} \). 

		We repeat this process until \( X' \) no longer contains any occurrence of \( a_{r-1} \). 
		
		Hence, \( X' = X_{c(X)\setminus\{a_r,a_{r-1}\}}a_r \) and, thus, \( XY = X'Y' \) is preserved. 
    \end{itemize}

The reduction process ends when \( X \) becomes the empty word:
\[
X = \emptyset, \quad Y = W.
\]
At this point, \( W = XY \) holds trivially.

By induction on the number of reduction steps, \( W = XY \) is maintained throughout the process, ensuring the correctness of the reduction process.

Hence, for every non-empty word \( W \) over \( A \), we define its canonical form as follows:  
\[ 
W = W_1 W_2 \cdots W_m,
\]  
where for every \( 1 \leq i \leq m \), there exist integers \( 1 \leq j_i \) and \( 0 \leq i_l \) such that the following conditions are satisfied:  
\begin{enumerate}
    \item[(CF1)] \( W_i \) is one of the following forms:  
    \[
    W_i = a_{j_i}a_{j_i+2}\cdots a_{j_i+2i_l}, \quad 
    W_i = a_{j_i}a_{j_i+1}a_{j_i+3}\cdots a_{j_i+2i_l+1}, \quad 
    W_i = a_{j_i+1}a_{j_i}.
    \]
    \item[(CF2)] If \( W_i = a_{j_i}a_{j_i+1} \) or \( W_i = a_{j_i+1}a_{j_i} \), then \( j_i + 4 \leq j_{i+1} \).
    \item[(CF3)] If \( W_i = a_{j_i}a_{j_i+2}\cdots a_{j_i+2i_l} \), then \( j_i+2i_l + 3 \leq j_{i+1} \).
    \item[(CF4)] If \( W_i = a_{j_i}a_{j_i+1}a_{j_i+3}\cdots a_{j_i+2i_l+1} \), then \( j_i+2i_l + 4 \leq j_{i+1} \).
\end{enumerate}
The conditions (CF2) to (CF4) ensure that there is a gap of at least two consecutive letters between the range of letters in \( W_i \) and \( W_{i+1} \).


For each \( W_i \), we define the letters \( \overline{a}_{W_i} \) and \( \underline{a}_{W_i} \) as follows:
\begin{enumerate}
    \item[•] If \( W_i = a_{j_i}a_{j_i+2}\cdots a_{j_i+2i_l} \), then \( \overline{a}_{W_i} = a_{j_i} \) and \( \underline{a}_{W_i} = a_{j_i+2i_l} \).
    \item[•] If \( W_i = a_{j_i}a_{j_i+1}a_{j_i+3}\cdots a_{j_i+2i_l+1} \), then \( \overline{a}_{W_i} = a_{j_i} \) and \( \underline{a}_{W_i} = a_{j_i+2i_l+1} \).
    \item[•] If \( W_i = a_{j_i+1}a_{j_i} \), then \( \overline{a}_{W_i} = a_{j_i} \) and \( \underline{a}_{W_i} = a_{j_i+1} \).
\end{enumerate}

To prove the uniqueness of the canonical form of a non-empty word \( W \) over \( A \), we assume that \( W \) has two distinct canonical forms that satisfy the conditions described earlier:
\[
W = W_1 \cdots W_{m_1} = V_1 \cdots V_{m_2}.
\]
Here, \(\overline{W}:= W_1 \cdots W_{m_1} \) and \(\overline{V}:= V_1 \cdots V_{m_2} \) represent two different decompositions of \( W \) into subwords.

For each subword \( W_i \), there exist integers \( 1 \leq j_i \) and \( 0 \leq i_l \) such that \( W_i \) is one of the following forms:  
\[
W_i = a_{j_i}a_{j_i+2}\cdots a_{j_i+2i_l}, \quad 
W_i = a_{j_i}a_{j_i+1}a_{j_i+3}\cdots a_{j_i+2i_l+1}, \quad 
W_i = a_{j_i+1}a_{j_i}.
\]
Similarly, for each subword \( V_i \), there exist integers \( 1 \leq k_i \) and \( 0 \leq i_{l'} \) such that \( V_i \) is one of the following forms:  
\[
V_i = a_{k_i}a_{k_i+2}\cdots a_{k_i+2i_{l'}}, \quad 
V_i = a_{k_i}a_{k_i+1}a_{k_i+3}\cdots a_{k_i+2i_{l'}+1}, \quad 
V_i = a_{k_i+1}a_{k_i}.
\]
Both decompositions must also satisfy the other stated conditions and properties of the canonical form.

Let \( r \) and \( r' \) be the smallest integers such that \( W_r \) does not match \( V_{r'} \), \(m_1 - r = m_2 - r'\) and \( W_{r+p} \) matches \( V_{r'+p} \), for \( 1 \leq p \leq m_1 - r \). 

By symmetry, we may assume that the index of the letter \( \overline{a}_{W_r} \) in the set \( A \) is greater than that of \( \overline{a}_{V_{r'}} \).
Since there is a gap of at least two consecutive letters between the letters in \( V_{r'} \) and \( V_{r'+1} \),
it is not possible to include \( \overline{a}_{W_r} \) in any word equal to \( V_{r'} \) or any other \( V_i \) for \( i \leq r' \).
Similarly, if the index of \( \underline{a}_{W_r} \) does not match the index of \( \underline{a}_{V_{r'}} \), we reach a contradiction. 
Thus, we must have \( \overline{a}_{W_r} = \overline{a}_{V_{r'}} \) and \( \underline{a}_{W_r} = \underline{a}_{V_{r'}} \).
This implies that \( c(W_r) = c(V_{r'}) \). 

Since \( W_r \) does not match \( V_{r'} \), we must have either \( W_r = a_{j_r}a_{j_r+1}  \) and \( V_{r'} = a_{j_r+1}a_{j_r}  \), or vice versa.
By symmetry, we assume the former case.  

Thus, we have shown that the elements of the Layered Catalan monoid \( LC_n \) can be uniquely represented in their canonical form. 
As stated earlier, we now shift the indices of the elements of \( A \) back to their original form using the map that we provided.

The canonical form of any word, 
can be described as follows:


\begin{prop}\label{WSegments}
Let \( W \) be a non-empty word over \( A \) that satisfies Condition~\ref{itm:star}.  
We represent the indices of letters in the canonical form of \( W \) as \( m \) disjoint segments around a circular arrangement of \( n \) positions (indexed from \( 1 \) to \( n \)), such that:  
\begin{itemize}
    \item Each segment \( S_k = [i_k, j_k] \) starts at \( i_k \) and ends at \( j_k \) in a clockwise manner around \( 1 \) to \( n \) with length less than the cycle.
    \item The segments are ordered sequentially in a clockwise manner:
    \[
    i_1 \leq j_1 < i_2 \leq j_2 < \cdots < i_m \leq j_m
    \]
    ensuring a partial cyclic order, which allows segments to wrap around the clock (i.e., in a segment can go from the end of the set back to the beginning, e.g., a segment can be as from \( [\ldots, n, 1, \ldots] \)).
    \item The union of all segments forms a partial cyclic order, where segments follow one another, but the cycle is not completed.
\end{itemize}
Furthermore, the following conditions hold:
\begin{enumerate}
    \item \( a_{i_k}, a_{j_k} \in c(W) \), for all \( 1 \leq k \leq m \).
    \item \( a_{i_{k-2}}, a_{i_{k-1}}, a_{j_{k+1}}, a_{j_{k+2}} \not\in c(W) \), for all \( 1 \leq k \leq m \).
    \item No integer \( l \) exists in any segment such that \( a_l, a_{l+1} \not\in c(W) \).
    \item For \( 1 \leq k \leq m \), if \( l \) is between two consecutive segments, then \( a_l \not\in c(W) \).
\end{enumerate}
Depending on the distance \( j_k - i_k \), in a clockwise direction, we assign the word \( W_k \) as follows:
\begin{itemize}
    \item If \( i_k = j_k \), then \( W_k = a_{i_k} \).
    \item If \( j_k - i_k = 1 \), and there exist \( w_l \) and \( w_{l'} \) (letters of \( W \)) such that \( l' < l \), \( w_l = a_{i_k} \), and \( w_{l'} = a_{j_k} (= a_{i_k+1}) \), then \( W_k = a_{i_k+1} a_{i_k} \). Otherwise, \( W_k = a_{i_k} a_{i_k+1} \).
    \item If \( j_k - i_k \geq 2 \), and \( j_k - i_k \) is even, then \( W_k = a_{i_k} a_{i_k+2} \cdots a_{j_k} \). If \( j_k - i_k \) is odd, then \( W_k = a_{i_k} a_{i_k+1} a_{i_k+3} \cdots a_{j_k} \).
\end{itemize}
The canonical form of \( W \) is equal to \( W_1 \cdots W_m \).
%
\end{prop}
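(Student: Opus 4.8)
The plan is to prove the proposition by identifying the segment data $\{S_k = [i_k,j_k]\}$ with the index ranges of the factors $W_1,\dots,W_m$ of the canonical form that has already been constructed and characterized by (CF1)--(CF4), and then checking two things: that the segment decomposition is recovered purely from the content $c(W)$, and that each factor $W_k$ is forced by the length and parity of its segment together with, in the length-one case, the order in which the two relevant letters first occur in $W$. Concretely, I would first use Condition~\ref{itm:star} to fix a cut point $r$ with $a_{(r+1)\bmod n},a_{(r+2)\bmod n}\notin c(W)$ and apply the shift $i\mapsto i-1_1+1\bmod n$, so that the circular index set becomes a genuine linear interval and the factors of (CF1) occupy increasing, non-overlapping index ranges. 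I then set $S_k=[\overline{a}_{W_k},\underline{a}_{W_k}]$, so that the clockwise ordering $i_1\le j_1<i_2\le\cdots$ and the ``length less than the cycle'' clause are immediate from (CF2)--(CF4) after undoing the shift.

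The heart of the argument is to show that these ranges depend on $c(W)$ alone. The only relations of $\Lambda$ that alter content are \eqref{LCn-relations} and \eqref{LCn-relations-res1}, which delete a middle letter $a_{i+1}$ lying between an $a_i$ and an $a_{i+2}$; such a deletion never removes the extreme indices of a maximal ``close'' run and never creates two consecutive absent positions. Conversely, once $c(W)$ contains consecutive absent positions $a_l,a_{l+1}\notin c(W)$, no relation of $\Lambda$ can connect the indices on the two sides, so a double gap must separate two factors. This yields exactly conditions (3) and (4) (no internal double gap; everything strictly between consecutive segments absent), while condition (1) holds because $i_k,j_k$ are the extreme present indices of their run, and condition (2) is precisely the buffer-of-two guaranteed around each factor by (CF2)--(CF4). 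Hence the segments are the maximal arcs of $c(W)$ in which consecutive present indices differ by at most $2$.

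Next I would match each factor to the assignment rule. For $i_k=j_k$ the run is a single present index and $W_k=a_{i_k}$. For $j_k-i_k\ge 2$, the normalization carried out by the Simplify rules (a)--(g), anchored at the left endpoint, collapses the run to $a_{i_k}a_{i_k+2}\cdots a_{j_k}$ when $j_k-i_k$ is even and to $a_{i_k}a_{i_k+1}a_{i_k+3}\cdots a_{j_k}$ when it is odd, independently of which interior letters were originally present; this is exactly the third bullet. The remaining and most delicate case is $j_k-i_k=1$, where $a_{i_k}a_{i_k+1}\ne a_{i_k+1}a_{i_k}$ in $LC_n$, so genuine order information must be recorded. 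I would trace Case~2 of the Reduction Rule, whose test ``$\exists\,r_1<r_2$ with $w_{r_1}=a_r$ and $w_{r_2}=a_{r-1}$'' detects whether the larger-index letter first occurs before the smaller-index one, producing the descending factor $a_{i_k+1}a_{i_k}$ exactly in that case and the ascending factor $a_{i_k}a_{i_k+1}$ otherwise, matching the middle bullet.

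The main obstacle I anticipate is precisely this length-one case: I must verify that the order of first appearance in $W$ is faithfully transmitted to the canonical form, i.e.\ that no later reduction or simplification step reverses the left-to-right order of a surviving adjacent pair. I would settle this by checking that each Simplify rule (a)--(g) preserves the relative order of any pair $\{a_{i_k},a_{i_k+1}\}$ isolated by the buffer condition, and that Case~2 is the unique place where such an order is assigned; combined with the identity $a_i a_{i+1} a_i=a_{i+1}a_i a_{i+1}=a_{i+1}a_i$ of~(1'), this shows that a two-generator subword reduces to the ascending form if and only if every occurrence of $a_{i_k}$ precedes every occurrence of $a_{i_k+1}$, the exact complement of the stated descending criterion. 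Finally, undoing the shift $i\mapsto i+1_1-1\bmod n$ returns the factors to the original circular indexing and identifies the canonical form of $W$ with $W_1\cdots W_m$.
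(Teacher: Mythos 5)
Your proposal is correct, and its skeleton---identifying each segment \( S_k=[i_k,j_k] \) with the index range \( [\overline{a}_{W_k},\underline{a}_{W_k}] \) of a factor of the canonical form permitted by (CF1)--(CF4), reading conditions (1)--(4) off the buffer requirements (CF2)--(CF4) after the shift that linearizes the circle, and extracting the factor shapes from the Reduction Rule (with Case~2 as the unique order-sensitive step for pairs)---is essentially how the paper arrives at the proposition, which it states with no separate proof, as a summary of the preceding construction. Where you genuinely diverge is in how uniqueness/well-definedness is secured. The paper argues head-on with two hypothetical canonical decompositions \( W_1\cdots W_{m_1}=V_1\cdots V_{m_2} \), using the two-letter gap to force \( \overline{a}_{W_r}=\overline{a}_{V_{r'}} \) and \( \underline{a}_{W_r}=\underline{a}_{V_{r'}} \), and then reduces to the case \( W_r=a_{j_r}a_{j_r+1} \) versus \( V_{r'}=a_{j_r+1}a_{j_r} \); at exactly that point the paper's argument trails off without deriving the contradiction. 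Your rewriting-invariant route supplies precisely this missing piece: every relation in \(\Lambda\) either preserves content or inserts/deletes only the index-middle letter \( a_{i+1} \) when both \( a_i \) and \( a_{i+2} \) are present, so the maximal runs of \( c(W) \) (no two consecutive absent indices) and their endpoints are invariants of the congruence, and for a buffered pair \( \{a_{i_k},a_{i_k+1}\} \) the predicate ``some occurrence of \( a_{i_k+1} \) precedes some occurrence of \( a_{i_k} \)'' is also invariant, since insertions happen only strictly inside a run's span and the relations of (1') together with long-range commutations visibly preserve the predicate. This buys a complete and checkable uniqueness argument where the paper's direct comparison is incomplete, at the cost of verifying invariance relation by relation. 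Two small refinements you should make when writing it up: your phrase ``a middle letter \( a_{i+1} \) lying between an \( a_i \) and an \( a_{i+2} \)'' must be read as between in index rather than in position, since \eqref{LCn-relations-res1} allows all six arrangements of the three letters; and for odd spans \( j_k-i_k\geq 3 \) you should check explicitly that the ascending prefix \( a_{i_k}a_{i_k+1} \) is forced regardless of the \emph{order} of occurrences, not only of which interior letters appear---e.g.\ \( a_2a_1a_4=a_2a_1a_2a_3a_4=a_2a_1a_3a_4=a_1a_3a_4=a_1a_2a_4 \) by (1'), \eqref{LCn-relations}, and Lemma~\ref{relation3}---which is the analogue for long segments of the pair analysis you already planned.
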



Thus, the structure of \( W \) is formed by a sequence of non-overlapping and well-separated segments on the clock face that follow the partial cyclic order, with potential wrapping of segments but not completing a full cycle.

Note that, as \( \overline{a}_{W_{r-1}} \leq j_{r}+3 \) and \( W \) is in canonical form, it is not possible to include \( a_{j_r+1} \) before \( a_{j_r} \).  

In the canonical form of \( W = W_1 \cdots W_m \), we refer to each \( W_k \) as a segment of \( W \). The indices \( i^{(W)}_k \) and \( j^{(W)}_k \) represent the starting and ending positions of the segment \( W_k \), respectively.  
If the context is clear, we refer to the word \( W \) instead of explicitly mentioning \( i^{(W)}_k \) and \( j^{(W)}_k \), and we write \( i_k \) and \( j_k \).  
When \( W_k = a_{i_k} a_{i_k+1} \), we say that \( W_k \) is a blocker.
Additionally, it is clear that if \( k \neq k' \), the segments \( W_k \) and \( W_{k'} \) commute.

Now, proceed under the assumption that \( W \) does not satisfy Condition~\ref{itm:star}.

We prove that if \( n > 3 \), then any word that does not satisfy condition \ref{itm:star} is equal to the zero element of \( S \), which serves as the absorbing element in \( S \).
However, the cases \( n = 2 \) and \( n = 3 \) behave differently.

If \( n = 2 \), then by (1'), using the relations \( a_1^2 = a_1 \), \( a_2^2 = a_2 \), and  
\[
a_1 a_2 a_1 = a_2 a_1 a_2 = a_1 a_2 = a_2 a_1,
\]
we conclude that if \( c(W) = \{a_1, a_2\} \), then \( W = a_1 a_2 \).  
However, the words \( a_1 \) and \( a_2 \) do not satisfy condition \ref{itm:star}.

If \( n = 3 \), then by Lemma~\ref{relation3}, we apply the following relations:  
\[
a_3 a_2 a_1 = a_3 a_1 a_2 = a_2 a_3 a_1 = a_1 a_2 a_3 = a_1 a_3 a_2 = a_2 a_1 a_3 = a_1 a_3 = a_2a_1 = a_3a_2.
\]
Using these, we can eliminate all occurrences of the letter \( a_2 \) in \( W \) whenever \( c(W) = \{a_1, a_2, a_3\} \).
Thus, in this case, we have \( W = a_1 a_3 \).
However, the words \( a_1 a_2 \), \( a_2 a_3 \), and \( a_3 a_1 \) are pairwise distinct and not equal to \( a_1 a_3 \). Moreover, none of them satisfy condition \ref{itm:star}.
Moreover \( W = a_1 a_3 \) is the zero of $S$.

Now, let us consider the case where \( n > 3 \). 
We prove that if \( n \) is even, then \( W = a_1 a_3 \cdots a_{n-1} \); otherwise, if \( n \) is odd, we have \( W = a_1 a_2 a_4 \cdots a_{n-1} \). First, we consider the case \( n = 4 \), and afterward, we prove the result for all \( n \geq 5 \).

Suppose that \( n = 4 \).  

We first consider the case where \( c(W) \) does not contain a particular letter, say \( a_4 \).  
Since \( W \) does not satisfy condition~\ref{itm:star}, it follows that \( a_1, a_3 \in c(W) \).  
By Lemma~\ref{relation3}, we can eliminate all occurrences of \( a_2 \) in \( W \).  
Hence, we obtain \( W = a_1 a_3 \). 

On the other hand, if \( c(W) \) contains all letters of \( A \),  
then there exist words \( W_1, \ldots, W_m \) such that \( W \) can be expressed as  
\[
W = W_1 a_4 W_2 \cdots a_4 W_m,
\]
where \( c(W_i) \subseteq \{a_1, a_2, a_3\} \) for each \( i \).  

If \( W_i \) is non-empty, we face the following possibilities: 

If \( c(W_i) = \{a_1, a_2, a_3\} \), then by Lemma~\ref{relation3}, we have \( W_i = {W_i}_{\{a_1,a_3\}} \).  Hence,
we can eliminate all occurrences of \( a_2 \) from \( W_i \).

If \( c(W_i) = \{a_1, a_2\} \), then using the relations 
\( a_1a_2a_4=a_1a_2a_3a_4 \), \[a_4a_1a_2 = a_4a_2= a_4a_2a_3=a_4a_1a_2a_3,\]
\(a_2a_1=a_2a_1a_2=a_1a_2a_1\)
 and noting that \( W_i \) is flanked by \( a_4 \) in \( W \), we can insert \( a_3 \) in \( W_i \).  
By the argument above, we can then eliminate all occurrences of \( a_2 \) from \( W_i \). The same reasoning applies when \( c(W_i) = \{a_3, a_2\} \).  

Similarly, if \( W_i = a_2 \), then using the relations \[ a_4a_2=a_2a_4 = a_1a_2a_4 = a_1a_2a_3a_4 ,\] we can substitute \( a_1a_2a_3 \) for \( W_i \) in \( W \) when \( W_i \) is flanked by \( a_4 \).  

Hence, we can eliminate all occurrences of \( a_2 \) in \( W \), and by the argument above, we obtain \( W = a_1 a_3 \).  

Throughout this proof, we assume that the indices of the elements of \(A\) are taken modulo $n$. That is, by \(a_{i-1}\), we mean \(a_{(i-1) \bmod n}\).

Now, we consider the case where \( n \geq 5 \). In this case, for every integer \( i \), the integers  
\( i-3, i-2, i-1, i, i+1 \) are pairwise distinct modulo \( n \).  
Additionally, the distance between \( i+1 \) and \( i-1 \) in one direction modulo \( n \) is 1,  
while in the other direction, it is at least 2.

First, suppose that there exists an integer \( i \) such that \( a_i \not\in c(W) \).  
Since \( W \) does not satisfy condition~\ref{itm:star}, it follows that \( a_{i-1}, a_{i+1} \in c(W) \).  

If \( W = W_1 a_{i-1} a_{i+1} W_2 \) or \( W = W_1 a_{i+1} a_{i-1} W_2 \) for some words \( W_1 \) and \( W_2 \),  
then, by Lemma~\ref{relation3}, we can insert \( a_i \) into the word \( W \).  

Otherwise, we have  
\[
W = W_1 a_{i-1} W_2 a_{i+1} W_3  
\quad \text{or} \quad  
W = W_1 a_{i+1} W_2 a_{i-1} W_3,
\]
where \( W_2 \) is a non-empty word and \( a_{i-1}, a_i, a_{i+1} \not\in c(W_2) \).  

First, assume the former case.  

If \( a_{i-2} \not\in c(W_2) \), then we have \( a_{i-1} W_2 = W_2 a_{i-1} \),  
which allows us to insert \( a_i \) into \( W \),  
since the relation  
\[
a_{i-1}  a_{i+1} = a_{i-1} a_i a_{i+1}
\]
holds.  

Otherwise, we have \( W_2 = W_4 a_{i-2} W_5 \) for some words \( W_4 \) and \( W_5 \),  
where \( a_{i-2} \not\in W_4 \).  
Hence, we obtain \( a_{i-1} W_4 = W_4a_{i-1} \).  

Now, using the relation  
\[
a_{i-1} a_{i-2} = a_{i-1} a_{i-2} a_{i-1},
\]
we can rewrite \( W \) as  
\[
W = W_1 a_{i-1} W_4 a_{i-2} a_{i-1} W_5 a_{i+1} W_3.
\]
By continuing this process, we can progressively adjust \( W_5 \)  
until we insert the letter \( a_{i-1} \) immediately to the left of \( a_{i+1} W_3 \).  
At this stage, we can insert the letter \( a_i \) into \( W \).  

Now, assume that  
\[
W = W_1 a_{i+1} W_2 a_{i-1} W_3.
\]
If \( a_{i-2} \not\in c(W_2) \), then we have  
\[
a_{i+1} W_2 a_{i-1} = a_{i+1} a_{i-1} W_2,
\]
which allows us to insert \( a_i \) into \( W_2 \).  

Otherwise, if \( a_{i-2} \in c(W_2) \), then we consider \( a_{i-3} \).  
If \( a_{i-3} \not\in c(W_2) \), then we obtain  
\[
a_{i+1} W_2 a_{i-1} = a_{i+1} a_{i-2} a_{i-1} (W_2)_{c(W_2)\setminus \{a_{i-2}\}}
= a_{i-2} a_{i+1} a_{i-1} (W_2)_{c(W_2)\setminus \{a_{i-2}\}}.
\]
Thus, we can insert \( a_i \) into \( W \), completing the proof.

Now, suppose that \( a_{i-3} \in c(W_2) \).  
Thus, we can write \( W_2 \) as  
\(
 W_4 a_{i-3} W_5,
\)
where \( a_{i-3} \not\in c(W_5) \).  
Then, we have  
\[
W_2 a_{i-1} = W_4 a_{i-3} W_5 a_{i-1}.
\]  
If \( a_{i-2} \not\in c(W_5) \), we have
\[
W_2 a_{i-1} = W_4 a_{i-1} a_{i-3} W_5.
\]
Otherwise, then we obtain  
\[
W_2 a_{i-1} = W_4 a_{i-3} a_{i-2} a_{i-1} {W'}_5
= W_4 a_{i-3} a_{i-1} {W'}_5 = W_4 a_{i-1} a_{i-3} {W'}_5,
\]
for some word \( {W'}_5 \).  
By iterating this process, we continue shifting \( a_{i-1} \) until it is positioned immediately to the right of \( a_{i+1} \).  
At this stage, we can insert the letter \( a_i \) into \( W \), completing the argument.

Since \( a_i \) was chosen arbitrarily,  
we may assume that \( c(W) = A \).

Then, we have  
\[
W = W_1 a_n W_2 a_n \cdots W_{m-1} a_n W_m,
\]
for some words \( W_1, \ldots, W_m \) where \( a_n \not\in W_i \) for each \( i \).  

Suppose that \( a_1 \in W_i \) for some \( 1 < i < m \).  
Then, we can write \( W_i = V_1 a_1 V_2 \), for some words \( V_1 \) and \( V_2 \) such that \( a_1 \not\in c(V_2) \).  

If \( a_2 \not\in c(V_2) \), then we have \( a_1 V_2 = V_2 a_1 \), and thus,  
\( W_i = {W'}_i a_1 \), for some word \( {W'}_i \).  

Otherwise, if \( a_2 \in c(V_2) \), then we write \( V_2 = V_3 a_2 V_4 \),  
for some words \( V_3 \) and \( V_4 \) where \( a_2 \not\in c(V_3) \).  
Thus, we obtain \( a_1 V_2 = V_3 a_1 a_2 V_4 \).  

If \( a_3 \not\in c(V_4) \), then we get  
\[
W_i a_n = V_1V_3(V_4)_{c(V_4)\setminus\{a_2\}} a_1 a_2 a_n = V_1V_3(V_4)_{c(V_4)\setminus\{a_2\}} a_1 a_n a_2.
\]  

Otherwise, if \( a_3 \in c(V_4) \), then we write \( V_4 = V_5 a_3 V_6 \),  
for some words \( V_5 \) and \( V_6 \) where \( a_3 \not\in c(V_5) \).  
Thus, we obtain  
\[
a_1 a_2 V_4 = a_1 a_2 V_5 a_3 V_6 = V'_5 a_1 a_2 a_3 V_6 = V'_5 a_1 a_3 V_6 = V'_5 a_3 a_1 V_6.
\]  
for some word $V'_5$.

By iterating this process, we continue shifting \( a_1 \) until it is positioned immediately to the left of \( a_n \) in \( W_i a_n \).  
Similarly, we obtain the same result for \( W_1 a_n \).  

Now, suppose again that \( a_1 \in W_i \) for some \( 1 < i < m \).  
Then, we can write \( W_i = V_1 a_1 V_2 \), for some words \( V_1 \) and \( V_2 \) such that \( a_1 \not\in c(V_1) \).  

If \( a_2 \not\in c(V_1) \), then \( V_1 a_1 = a_1 V_1 \), and thus,  
\[
W_i = a_1 {W'}_i,
\]
for some word \( {W'}_i \).  

Otherwise, if \( a_2 \in c(V_1) \), then we write \( V_1 = V_3 a_2 V_4 \),  
where \( V_3 \) and \( V_4 \) are words with \( a_2 \not\in c(V_4) \).  
Then, we obtain  
\[
V_1 a_1 = V_3 a_2 V_4 a_1 = V_3 a_2 a_1 V_4 = V_3 a_1 a_2 a_1 V_4.
\]  

Continuing this process, we shift \( a_1 \) step by step until it appears immediately to the right of \( a_n \) in \( a_n W_i \).  
Similarly, we obtain the same result for \( a_n W_m \).  

Applying the same argument, we achieve an analogous result for \( a_{n-1} \).  

If there exists an integer \( i \) such that \( a_1, a_{n-1} \in c(W_i) \),  
then we can write  
\[
a_n W_i = W''_i a_n a_{n-1} a_1 W'_i
\]
or equivalently,
\[
W_i a_n = W'_i a_n a_1 a_{n-1} W''_i,
\]
for some words \( W'_i \) and \( W''_i \) where \( a_n \not\in c(W'_i) \) and \( a_n \not\in c(W''_i) \).

Thus, we can eliminate \( a_n \). By repeating this process, we successively remove all occurrences of \( a_n \) from \( W \).  

Now, suppose that such an integer \( i \) does not exist.  
Then, there exist distinct integers \( i_1 \) and \( i_2 \) such that \( a_1 \in c(W_{i_1}) \) and \( a_{n-1} \in c(W_{i_2}) \),  
while \( a_1, a_{n-1} \not\in c(W_i) \) for every \( i_1 < i < i_2 \).  
Thus, we have  
\begin{align*}
W_{i_1} a_n W_{i_1+1} a_n W_{i_1+2} \cdots a_n W_{i_2}  
&= W_{i_1} W_{i_1+1} W_{i_1+2} \cdots a_n W_{i_2} \\  
&= W' a_1 a_n W_{i_2} \\  
&= W' a_1 a_n a_{n-1} W'_{i_2},
\end{align*}
for some words \( W' \) and \( W'_{i_2} \).  A similar argument applies if \( a_1 \in c(W_{i_2}) \) and \( a_{n-1} \in c(W_{i_1}) \).  

Now, by the previous argument, we can eliminate all occurrences of \( a_n \).  

If \( n \) is even, we may continue eliminating the elements \( a_{n-2}, \dots, a_2 \).  
Thus, by relation~\ref{Cn-relations}, we obtain  
\[
W = a_1 a_3 \cdots a_{n-1}.
\]  

Otherwise, if \( n \) is odd, we may eliminate all elements \( a_{n-2}, \dots, a_3 \).  
Then, by relation~\ref{Cn-relations}, we have  
\[
W = W' a_4 \cdots a_{n-1},
\]  
for some word \( W' \) where \( c(W') = \{a_1, a_2\} \).  
This implies  
\[
W = a_1 a_2 a_4 \cdots a_{n-1}  
\quad \text{or} \quad  
W = a_2 a_1 a_4 \cdots a_{n-1}.
\]  
If \( W = a_2 a_1 a_4 \cdots a_{n-1} \), then, by relation~(1'), we have  
\begin{align*}
W 
&= a_2 a_1 a_4 \cdots a_{n-1}  
 = a_2 a_1 a_2 a_4 \cdots a_{n-1} 
 = a_2 a_1 a_2 a_3 a_4 \cdots a_{n-1}  \\
&= a_2 a_1 a_3 a_4 \cdots a_{n-1}  
 = a_1 a_2 a_3 a_4 \cdots a_{n-1}  
 = a_1 a_2 a_4 \cdots a_{n-1}.
\end{align*}

Thus, we conclude the following lemma: 

\begin{lem}\label{LCnZero}
For \( n > 3 \), the zero of \( LC_n \) is \( a_1 a_3 \cdots a_{n-1}\) if \( n \) is even,  
and \( a_1 a_2 a_4 \cdots a_{n-1} \) if \( n \) is odd.  
For \( n = 2 \), the zero is \( a_1 a_2 \), and for \( n = 3 \), it is \( a_1 a_3 \).  
\end{lem}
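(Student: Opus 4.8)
The plan is to establish Lemma~\ref{LCnZero} in two stages: first identify a concrete word that is a candidate for the zero, then prove that it is genuinely absorbing. The bulk of the analysis preceding the statement already does the heavy lifting, so my proof would primarily \emph{assemble} those computations into the zero-element claim. Specifically, the argument above shows that \emph{every} word $W$ over $A$ that fails Condition~\ref{itm:star} reduces, via relations~(1') and~\eqref{LCn-relations-res1} (equivalently Lemma~\ref{relation3}), to a single fixed word depending only on the parity of $n$. My first step is to observe that a word $W$ fails Condition~\ref{itm:star} precisely when for \emph{every} index $r$, at least one of $a_{(r+1)\bmod n}$, $a_{(r+2)\bmod n}$ lies in $c(W)$; in particular the candidate zeros $z_{\mathrm{even}}=a_1a_3\cdots a_{n-1}$ and $z_{\mathrm{odd}}=a_1a_2a_4\cdots a_{n-1}$ each fail Condition~\ref{itm:star}, since their contents hit every ``gap'' of two consecutive indices on the cycle.

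The core of the proof is to show that the candidate word $z$ is a two-sided zero, i.e.\ $z\,a_k = a_k\,z = z$ for every generator $a_k$ and $z\cdot 1 = 1\cdot z = z$. By the preceding reduction argument, for any generator $a_k$ the product $z a_k$ (respectively $a_k z$) is again a word failing Condition~\ref{itm:star} — appending a letter cannot create a length-two gap that was absent in $z$ — and hence by the established reduction it equals the unique fixed word of the correct parity, namely $z$ itself. This is the cleanest route: rather than verifying absorption letter-by-letter through the relations, I invoke the already-proved fact that \emph{all} Condition-\ref{itm:star}-failing words collapse to $z$, and note that $za_k$ and $a_kz$ are among them. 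I would then separately handle the small cases $n=2$ and $n=3$ using the explicit relations displayed earlier (for $n=2$, $a_1a_2=a_2a_1$ with $a_ia_ja_i=a_ia_j$; for $n=3$, Lemma~\ref{relation3} forces $W=a_1a_3$ when $c(W)=\{a_1,a_2,a_3\}$), confirming that $a_1a_2$ and $a_1a_3$ respectively are the zeros there.

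I would also need to confirm that $z$ is genuinely the \emph{zero} and not merely an idempotent absorbing some elements — that is, that it is distinct from all the non-absorbing canonical forms and that $LC_n$ indeed possesses a zero. This follows from uniqueness of the canonical form (already proved) together with the reduction showing every Condition-\ref{itm:star}-failing word equals $z$: since there is at least one such word (e.g.\ $z$ itself) and all of them coincide, $LC_n$ has a single absorbing element, which is $z$.

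The main obstacle, and the point where I would have to be most careful, is the modular bookkeeping in verifying that $za_k$ and $a_kz$ still fail Condition~\ref{itm:star} for \emph{every} $k$ and every residue class, and that the ``fixed word of the given parity'' is well-defined independent of which index one starts the reduction from. The index-shifting map $i\mapsto i-1_1+1 \bmod n$ used to normalize segments introduces a genuine risk of off-by-one or wrap-around errors, particularly at the seam where the cyclic relation~(1') replaces the commuting relation $a_1a_n=a_na_1$ of the ordinary Catalan monoid. I would therefore treat the cases $n$ even and $n$ odd separately and check explicitly that the displayed candidate has content meeting every two-consecutive-index window on the $n$-cycle, which is exactly what guarantees the failure of Condition~\ref{itm:star} is preserved under multiplication and pins down $z$ uniquely.
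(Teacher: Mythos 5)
Your proposal is correct and follows essentially the same route as the paper: there the lemma is drawn as a direct consequence of the preceding reduction showing that, for \( n > 3 \), every word failing Condition~\ref{itm:star} equals the parity-dependent word \( a_1a_3\cdots a_{n-1} \) or \( a_1a_2a_4\cdots a_{n-1} \), with \( n=2,3 \) treated by the explicit relations; your only addition is to spell out the absorption step (that \( z a_k \) and \( a_k z \) again fail Condition~\ref{itm:star} because content only grows), which the paper leaves implicit. One caveat: your opening claim that \emph{every} word failing Condition~\ref{itm:star} collapses to a single fixed word is true only for \( n > 3 \) --- for \( n=2,3 \) the paper exhibits Condition-\ref{itm:star}-failing words (e.g.\ \( a_1 \), \( a_2 \), and for \( n=3 \) the words \( a_1a_2 \), \( a_2a_3 \), \( a_3a_1 \)) that are distinct from the zero --- but since you handle those small cases separately by direct use of relation~(1') and Lemma~\ref{relation3}, the argument stands.
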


The monoid \( LC_2 \) consists of four elements: \(\{a_1a_2, a_1, a_2, 1\}\).  
The monoid \( LC_3 \) has eight elements:  
\[
\{a_1a_3, a_1, a_2, a_3, a_1a_2, a_2a_3, a_3a_1, 1\}.
\]
Apart from the result in Lemma~\ref{LCnZero}, for \( LC_3 \), the concept of the canonical form for non-zero and non-empty elements remains the same as for \( n \geq 4 \).  
Thus, from now on, when discussing the monoid \( LC_n \), we assume \( n > 2 \), with the only distinction being that the notion of zero differs in the case of \( n = 3 \).

Let \( W = W_1 W_2 \cdots W_m \) be a non-empty, non-zero word over \( A \) in its canonical form.
	
For a segment \( W_k \) with starting and ending indices \( i_k \) and \( j_k \), the ordering of the indices, for example, for \( i_k \leq k' \leq j_k \), is interpreted with respect to the clockwise direction within each segment (i.e., the segments are arranged in a circular, clockwise order).
	
\begin{lem}\label{idemNonBlocker}
Let \( W_k \) be a non blocker segment of \( W \), with starting and ending indices \( i_k \) and \( j_k \), respectively.  
If \( i_k \leq k' \leq j_k \), then \( a_{k'} \) commutes with \( W_k \), i.e., \( a_{k'} W_k = W_k a_{k'} = W_k \).
\end{lem}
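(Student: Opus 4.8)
The plan is to verify the two absorption identities $a_{k'}W_k=W_k$ and $W_ka_{k'}=W_k$ by a case analysis on the shape of the non-blocker segment $W_k$ and on where $k'$ sits relative to the letters actually occurring in $W_k$. After the cyclic relabelling used to build the canonical form, I may assume $W_k$ occupies an ordinary index interval $[i_k,j_k]$; since every relation I shall use (idempotency and the adjacent-triple laws of~\eqref{Cn-relations} and~(1'), the commutation of non-cyclically-adjacent generators, and~\eqref{LCn-relations-res1}) holds for all indices modulo $n$, the cyclic indexing causes no trouble. Moreover, Condition~\ref{itm:star} together with condition~(3) of Proposition~\ref{WSegments} forces the arc $[i_k,j_k]$ to have length at most $n-2$, so that two letters of $W_k$ at index-distance at least $2$ are never cyclically adjacent and hence commute by~\eqref{Cn-relations}. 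Recall that a non-blocker segment is, up to this relabelling, one of: $W_k=a_{i_k}$; the reversed pair $W_k=a_{i_k+1}a_{i_k}$; an even segment $W_k=a_{i_k}a_{i_k+2}\cdots a_{j_k}$ with $j_k-i_k\ge2$; or an odd segment $W_k=a_{i_k}a_{i_k+1}a_{i_k+3}\cdots a_{j_k}$ with $j_k-i_k\ge3$.

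The two shortest shapes are immediate. For $W_k=a_{i_k}$ both identities are the idempotent law $a_{i_k}^2=a_{i_k}$, and for the reversed pair $W_k=a_{i_k+1}a_{i_k}$ each $k'\in\{i_k,i_k+1\}$ is absorbed using idempotency together with relation~\eqref{Cn-relations} in the forms $a_{i_k}a_{i_k+1}a_{i_k}=a_{i_k+1}a_{i_k}$ and $a_{i_k+1}a_{i_k}a_{i_k+1}=a_{i_k+1}a_{i_k}$. For the even segment I separate present letters from gap positions. If $a_{k'}$ is one of the letters of $W_k$, then it lies at index-distance at least $2$ from every other letter, hence commutes with all of them; sliding it to its place and applying $a_{k'}^2=a_{k'}$ gives both identities. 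If $k'$ is a gap position, then it lies strictly between two consecutive present letters $a_{k'-1}$ and $a_{k'+1}$ and commutes with every letter of $W_k$ except these two; sliding it beside them and invoking~\eqref{LCn-relations-res1} in the forms $a_{k'}a_{k'-1}a_{k'+1}=a_{k'-1}a_{k'+1}$ (for the left action) and $a_{k'-1}a_{k'+1}a_{k'}=a_{k'-1}a_{k'+1}$ (for the right action) absorbs it.

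For the odd segment the same present-letter/gap-position dichotomy disposes of every $k'\ge i_k+2$ verbatim, because from $a_{i_k+1}$ onward the occurring letters again sit at index-distance at least $2$. The one delicate point — and the only genuine obstacle — is the initial adjacent pair $a_{i_k}a_{i_k+1}$ at the positions $k'\in\{i_k,i_k+1\}$. For $k'=i_k$ the left action $a_{i_k}W_k=W_k$ is trivial from $a_{i_k}^2=a_{i_k}$, but the right action, after sliding the appended $a_{i_k}$ through the commuting tail $a_{i_k+3}\cdots a_{j_k}$ and applying $a_{i_k}a_{i_k+1}a_{i_k}=a_{i_k+1}a_{i_k}$, becomes
\[
W_ka_{i_k}=a_{i_k+1}a_{i_k}\,a_{i_k+3}a_{i_k+5}\cdots a_{j_k},
\]
which displays the prefix in the wrong order; symmetrically, for $k'=i_k+1$ it is the left action that produces this same reordered word. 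To restore the canonical prefix I would invoke the identity already verified for Simplify rule~(d), namely $a_ra_{r-1}a_{r+2}=a_{r-1}a_ra_{r+2}$, with $r=i_k+1$, which gives $a_{i_k+1}a_{i_k}a_{i_k+3}=a_{i_k}a_{i_k+1}a_{i_k+3}$ and hence $W_ka_{i_k}=a_{i_k}a_{i_k+1}a_{i_k+3}\cdots a_{j_k}=W_k$. This step is exactly where the non-blocker hypothesis enters: it guarantees that the letter $a_{i_k+3}$ occurs, which is what makes rule~(d) applicable; for a genuine blocker $a_{i_k}a_{i_k+1}$ no such letter is present, and indeed $a_{i_k+1}a_{i_k}\neq a_{i_k}a_{i_k+1}$, so the statement correctly fails there. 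Assembling the three situations — present letter, gap position, and initial adjacent pair — yields $a_{k'}W_k=W_ka_{k'}=W_k$ for every $i_k\le k'\le j_k$, completing the argument.
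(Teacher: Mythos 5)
Your proof is correct, but it takes a genuinely different route from the paper's. The paper's proof rests on one unifying trick: for $j_k-i_k>1$ it first identifies the canonical segment with the \emph{filled-in} interval word $a_{i_k}a_{i_k+1}a_{i_k+2}\cdots a_{j_k}$ (justified by repeated insertion via $a_ia_{i+2}=a_ia_{i+1}a_{i+2}$ from~\eqref{LCn-relations}), after which every index $i_k\le k'\le j_k$ is a present letter and a single local computation handles all positions: slide $a_{k'}$ next to the block $a_{k'-1}a_{k'}$, absorb $a_{k'-1}$ using $a_{k'-2}a_{k'}a_{k'-1}=a_{k'-2}a_{k'}$ from Lemma~\ref{relation3}, square $a_{k'}$, and reinsert $a_{k'-1}$; the pair case $a_{i_k+1}a_{i_k}$ is done directly and the right action is dismissed with ``similarly.'' You instead work on the gapped canonical form itself, splitting into present letters (slide and square), gap positions (slide and apply the triple identities $a_{k'}a_{k'-1}a_{k'+1}=a_{k'-1}a_{k'+1}=a_{k'-1}a_{k'+1}a_{k'}$ from~\eqref{LCn-relations-res1}), and the odd-segment prefix $a_{i_k}a_{i_k+1}$, where you import the identity $a_ra_{r-1}a_{r+2}=a_{r-1}a_ra_{r+2}$ that the paper verified for Simplify rule~(d) — a legitimate derived identity of the monoid. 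Each approach buys something: the paper's filled-in normalization collapses your present/gap/prefix trichotomy into one computation (and makes your rule-(d) step unnecessary, since inside the full word the adjacent pair is handled by the $k'=i_k+1$ case), while your argument avoids having to justify the filled-in identification at all, is explicit about both the left and right actions (correctly isolating the right action at $k'=i_k$ on an odd segment as the one delicate spot, which the paper's ``similarly'' glosses over), covers the single-letter case $i_k=j_k$ that the paper's two-case split omits, and spells out the cyclic-adjacency point — that Condition~\ref{itm:star} bounds each arc by $n-2$ positions, so in-arc distance $\ge 2$ really implies commutation — which the paper leaves implicit. Your closing observation that the non-blocker hypothesis enters precisely through the availability of $a_{i_k+3}$ for rule~(d), and that the statement genuinely fails for a blocker, is a correct and worthwhile sanity check consistent with Lemma~\ref{idem}.
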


\begin{proof}
We consider two cases: \( j_k - i_k > 1 \) and \( j_k - i_k = 1 \).

Case \( j_k - i_k > 1 \):\\ 
In this case, the segment \( W_k \) match to the word \( a_{i_k}a_{i_k+1}a_{i_k+2}\cdots a_{j_k} \).  
We use Lemma~\ref{LCn-relations-res1} multiple times in the following arguments.
\begin{itemize}
    \item[\( k' = i_k \):]  
    Clearly, \( a_{k'} W_k = W_k \).  

    \item[\( k' = i_k + 1 \):]  
    We have:  
    \[
    a_{i_k+1}a_{i_k}a_{i_k+1}a_{i_k+2} = a_{i_k+1}a_{i_k}a_{i_k+2} = a_{i_k}a_{i_k+2} = a_{i_k}a_{i_k+1}a_{i_k+2}.
    \]  
    Hence, \( a_{k'} W_k = W_k \).  

    \item[\( k' > i_k + 1 \):]  
    In this case,  
    \begin{align*} 
    a_{k'} W_k &= a_{k'}a_{i_k}a_{i_k+1}\cdots a_{k'-2}a_{k'-1}a_{k'}\cdots a_{j_k}
    = a_{i_k}a_{i_k+1}\cdots a_{k'-2}a_{k'}a_{k'-1}a_{k'}\cdots a_{j_k}\\
    &= a_{i_k}a_{i_k+1}\cdots a_{k'-2}a_{k'}a_{k'}\cdots a_{j_k}
    = a_{i_k}a_{i_k+1}\cdots a_{k'-2}a_{k'}\cdots a_{j_k}\\
    &= a_{i_k}a_{i_k+1}\cdots a_{k'-2}a_{k'-1}a_{k'}\cdots a_{j_k} = W_k.
    \end{align*}  
\end{itemize}

Case \( j_k - i_k = 1 \):\\ 
In this case, the segment \( W_k \) is equal to the word \( a_{i_k+1}a_{i_k} \).  
\begin{itemize}
    \item[\( k' = i_k \):]  
    We have, \( a_{k'} a_{i_k+1}a_{i_k} = a_{i_k} a_{i_k+1}a_{i_k}=a_{i_k+1}a_{i_k}=W_k \).  

    \item[\( k' = i_k + 1 \):]  
    Clearly, \( a_{k'} W_k = W_k \).
\end{itemize}

Similarly, we have \( W_k a_{k'} = W_k \).
\end{proof}

\begin{lem}\label{idem}
The word \( W \) is idempotent if and only if every segment \( W_k \) is not blocker.
\end{lem}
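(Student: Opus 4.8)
The plan is to reduce the statement to a segment-by-segment computation. Since the canonical form of $W$ is $W = W_1 \cdots W_m$ and distinct segments commute (as noted after Proposition~\ref{WSegments}), one may reorder $W^2 = W_1 \cdots W_m W_1 \cdots W_m$ by sliding the second copy of each $W_k$ leftward past the later segments, obtaining
\[
W^2 = W_1^2 W_2^2 \cdots W_m^2.
\]
Thus it suffices to analyze $W_k^2$ for each segment separately and then appeal to the uniqueness of the canonical form, established above. The whole argument turns on the dichotomy $W_k^2 = W_k$ (non-blocker) versus $W_k^2 \neq W_k$ (blocker).

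For the direction ``$\Leftarrow$'', assume no $W_k$ is a blocker. I would first show that every non-blocker segment is idempotent. Writing $W_k = a_{c_1} a_{c_2} \cdots a_{c_t}$, note that every index $c_s$ lies in the range $[i_k, j_k]$, regardless of which of the three non-blocker shapes $W_k$ takes. By Lemma~\ref{idemNonBlocker}, $W_k a_{c_s} = W_k$ for each such index, so absorbing the letters of the second copy one at a time gives
\[
W_k^2 = W_k a_{c_1} a_{c_2} \cdots a_{c_t} = W_k.
\]
Substituting into $W^2 = W_1^2 \cdots W_m^2$ yields $W^2 = W_1 \cdots W_m = W$, so $W$ is idempotent.

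For the direction ``$\Rightarrow$'', I would argue by contraposition: suppose some segment $W_k = a_{i_k} a_{i_k+1}$ is a blocker. Using the relations $a_{i_k} a_{i_k+1} a_{i_k} = a_{i_k+1} a_{i_k} a_{i_k+1} = a_{i_k+1} a_{i_k}$ from~\eqref{Cn-relations} (and relation~(1') in the wraparound case $i_k = n$), a direct computation gives
\[
W_k^2 = a_{i_k} a_{i_k+1} a_{i_k} a_{i_k+1} = a_{i_k} a_{i_k+1} a_{i_k} = a_{i_k+1} a_{i_k}.
\]
Hence $W_k^2 = a_{i_k+1} a_{i_k}$, the reversed (non-blocker) shape on the \emph{same} index range $[i_k, j_k]$. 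Because the index range of each factor is unchanged, the product $W_1^2 \cdots W_m^2$ still satisfies (CF1)--(CF4) --- in particular the gap conditions (CF2)--(CF4) treat $a_{i_k}a_{i_k+1}$ and $a_{i_k+1}a_{i_k}$ identically --- so $W_1^2 \cdots W_m^2$ is itself a canonical form and therefore equals the canonical form of $W^2$. It differs from $W = W_1 \cdots W_m$ in at least the flipped blocker segment, so by uniqueness $W^2 \neq W$ and $W$ is not idempotent.

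The main obstacle is the bookkeeping in this last step: one must be certain that $W_1^2 \cdots W_m^2$ is genuinely in canonical form, so that uniqueness can be invoked to separate $W^2$ from $W$. This is exactly where it matters that squaring either fixes a segment or merely reverses a length-two block without altering $i_k$ or $j_k$, so that no new reductions across segment boundaries are triggered and the previously verified gap conditions persist. The commutativity reduction $W^2 = W_1^2 \cdots W_m^2$ and the three-shape verification underlying Lemma~\ref{idemNonBlocker} are then routine.
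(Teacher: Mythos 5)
Your proof is correct and follows essentially the same route as the paper: decompose $W^2 = W_1^2 \cdots W_m^2$ by commutativity of distinct segments, absorb non-blocker squares via Lemma~\ref{idemNonBlocker}, and observe that squaring a blocker $a_{i_k}a_{i_k+1}$ yields the flipped segment $a_{i_k+1}a_{i_k}$. In fact you are somewhat more careful than the paper, which simply asserts that the flip ``ensures that $W$ is not idempotent,'' whereas you explicitly verify that $W_1^2\cdots W_m^2$ remains in canonical form and invoke uniqueness of canonical forms to conclude $W^2 \neq W$.
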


\begin{proof}
We have \( WW = W_1W_1 W_2W_2 \cdots W_mW_m \). 

If \( W_k \) is not blocker, then by Lemma~\ref{idemNonBlocker}, \( W_kW_k = W_k \). 
Consequently, \( W \) is idempotent. It follows that \(W\) is idempotent

On the other hand, if \( W_k = a_{i_k} a_{i_k+1} \), then \( W_kW_k = a_{i_k+1}a_{i_k} \), which ensures that \( W \) is not idempotent. Then \( W\) is not idempotent.
\end{proof}
 
We define two subsets, \( I_W \) and \( N_W \), of \( \{1, \ldots, m\} \) as follows:  
A segment index \( k \) belongs to \( I_W \) if \( W_k \) is not a blocker.  
On the other hand, \( k \in N_W \) if \( W_k = a_{i_k} a_{i_k+1} \).  

Furthermore, if \( I_W \) is non-empty, define \( W^{(I)} = \prod_{k \in I_W} W_k \). Otherwise, \( W^{(I)} \) is defined as the empty word.  

If \( N_W \) is non-empty, define  
\[
N^{l}_W = \prod_{k \in N_W} a_{i_k} \quad \text{and} \quad N^{r}_W = \prod_{k \in N_W} a_{i_{k}+1}.
\]  
Otherwise, both \( N^{l}_W \) and \( N^{r}_W \) are defined as the empty word.

Let \( W_1 W_2 \cdots W_{m_w} \) and \( V_1 V_2 \cdots V_{m_v} \) denote the canonical forms of non-empty and non-zero \( W \) and \( V \) over \( A \), respectively.  

For integers \( 1 \leq k \leq m_v \) and \( 1 \leq k' \leq m_w \), we say that the segment \( V_k \) is covered by \( W_{k'} \) if \( i^{(W)}_{k'} \leq i^{(V)}_k \leq j^{(V)}_k \leq j^{(W)}_{k'} \).  

\begin{lem}\label{PlusAndStarPrin}
Let \( e = W^{(e)}_1 W^{(e)}_2 \cdots W^{(e)}_{m_e} \) be a non-empty idempotent word in canonical form over \( A \), where \( e \in \varphi^{+}(W) \). 
For every \( 1 \leq k \leq m_e \),
there exists \( 1 \leq k' \leq m_w \) such that the segment \( W^{(e)}_k \) is covered by \( W^{(W)}_{k'} \), and if \( W^{(W)}_{k'} \) is a blocker, then \( W^{(e)}_k = a_{i^{(W)}_{k'}} \).

Similarly, for \( e \in \varphi^{\ast}(W) \), the same holds, but if \( W^{(W)}_{k'} \) is a blocker, then \( W^{(e)}_k = a_{i^{(W)}_{k'}+1} \).
\end{lem}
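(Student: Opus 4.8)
The plan is to use that, by the definition of $\varphi^{+}(W)$ in~\cite{Sha-Det2}, every $e\in\varphi^{+}(W)$ is an idempotent with $eW=W$ (and every $e\in\varphi^{\ast}(W)$ an idempotent with $We=W$), together with the fact that two segments whose index ranges are separated by at least two positions commute, since their letters then differ by more than one and commute by~\eqref{Cn-relations}. By Lemma~\ref{idem} every segment of $e$ is a non-blocker. I would first reduce the assertion to a local, segment-by-segment computation: once I know that $c(e)$ lies inside $\bigcup_{k'}[i^{(W)}_{k'},j^{(W)}_{k'}]$, I can group the segments of $e$ according to which segment of $W$ covers them and, since distinct groups occupy ranges pairwise separated by gaps of at least two, factor the product as $eW=\prod_{k'}e_{(k')}W^{(W)}_{k'}$, where $e_{(k')}$ is the product of the segments of $e$ covered by $W^{(W)}_{k'}$. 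By uniqueness of the canonical form, $eW=W$ is then equivalent to the family of local identities $e_{(k')}W^{(W)}_{k'}=W^{(W)}_{k'}$, and the case $e\in\varphi^{\ast}(W)$ is treated dually on the right.

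The main obstacle is the containment $c(e)\subseteq\bigcup_{k'}[i^{(W)}_{k'},j^{(W)}_{k'}]$, i.e.\ that $e$ has no letter sitting in a gap of $W$. I would argue by contradiction: if $a_{l}\in c(e)$ with $l$ outside every range of $W$, then commuting $a_{l}$ rightwards past the far segments of $W$ (which it commutes with) it either creates a fresh segment in the gap region or, if $l$ is at a boundary such as $l=j^{(W)}_{k'}+1$, extends the adjacent segment $W^{(W)}_{k'}$ beyond $j^{(W)}_{k'}$ (it is not absorbed, since Lemma~\ref{idemNonBlocker} absorbs only indices strictly inside the range); in either case the canonical form of $eW$ acquires a letter or a range absent from $W$, contradicting $eW=W$ via uniqueness of the canonical form. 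Making this watertight is where the work lies — either through the case analysis just sketched, or by showing that the support $\rho(W)=\bigcup_{k'}[i^{(W)}_{k'},j^{(W)}_{k'}]$, viewed as a union of arcs of the cycle $\mathbb{Z}/n\mathbb{Z}$, is a well-defined monoid invariant with $\rho(UV)\supseteq\rho(U)\cup\rho(V)$, whence $\rho(e)\subseteq\rho(eW)=\rho(W)$. Once containment holds, covering follows quickly: conditions (CF2)--(CF4) force a gap of at least two consecutive indices missing from $c(W)$ between consecutive segments of $W$, whereas inside a segment of $e$ consecutive letters differ by at most $2$, so a segment of $e$ cannot straddle such a gap and its whole range lies inside a single $[i^{(W)}_{k'},j^{(W)}_{k'}]$, that is, it is covered by $W^{(W)}_{k'}$.

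It then remains to analyse the local identities. If $W^{(W)}_{k'}$ is a non-blocker, every letter of $e_{(k')}$ has index in $[i^{(W)}_{k'},j^{(W)}_{k'}]$, so repeated application of Lemma~\ref{idemNonBlocker} gives $e_{(k')}W^{(W)}_{k'}=W^{(W)}_{k'}$ with no further restriction on $e_{(k')}$; this is exactly the unconstrained case. If $W^{(W)}_{k'}=a_{i}a_{i+1}$ is a blocker with $i=i^{(W)}_{k'}$, then its range has only two positions, so at most one segment of $e$ is covered by it and $e_{(k')}\in\{a_{i},\,a_{i+1},\,a_{i+1}a_{i}\}$; using relation~(1') one computes $a_{i+1}\,a_{i}a_{i+1}=a_{i+1}a_{i}$ and $a_{i+1}a_{i}\,a_{i}a_{i+1}=a_{i+1}a_{i}$, both non-blocker segments different from $a_{i}a_{i+1}$, so by uniqueness of the canonical form these would alter $W$. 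Hence $e_{(k')}=a_{i}$ (or empty), giving $W^{(e)}_{k}=a_{i^{(W)}_{k'}}$. For $e\in\varphi^{\ast}(W)$ the same computation on the right selects $a_{i+1}=a_{i^{(W)}_{k'}+1}$, which is the dual conclusion.
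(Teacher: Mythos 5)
Your proposal is correct and takes essentially the same route as the paper's proof: both argue that every segment of $e$ must be covered by some segment of $W$ by contradiction via uniqueness of the canonical form of $eW$, and both settle the blocker case by observing that the only alternatives $a_{i+1}$ and $a_{i+1}a_{i}$ would, through relation~(1'), produce the segment $a_{i+1}a_{i}$ in $eW$, contradicting $e\in\varphi^{+}(W)$ (dually for $\varphi^{\ast}(W)$). Your write-up merely makes explicit what the paper leaves implicit, namely the factorization of $eW$ into commuting local products and the support-containment step.
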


\begin{proof}
Suppose for \( 1 \leq k \leq m_e \), no such \( 1 \leq k' \leq m_w \) exists satisfying the conditions mentioned in the lemma; then there would exist a segment in \( eW \) that does not match any segment of \( W \), contradicting the assumption that \( e \in \varphi^{+}(W) \).  

Now, assume for \( 1 \leq k \leq m_e \), such \( 1 \leq k' \leq m_w \) exists that satisfies the conditions of lemma and \( W^{(W)}_{k'} \) is a blocker. In this case, \( W^{(W)}_{k'} = a_{i^{(W)}_{k'}}a_{i^{(W)}_{k'}+1} \). If \( W^{(e)}_k \neq a_{i^{(W)}_{k'}} \), since \( W \) is idempotent, it follows that \( W^{(e)}_k \) must be either \( a_{i^{(W)}_{k'}+1}a_{i^{(W)}_{k'}} \) or \( a_{i^{(W)}_{k'}+1} \). This introduces the segment \( a_{i^{(W)}_{k'}+1}a_{i^{(W)}_{k'}} \) in \( eW \), leading to a contradiction.  

Similarly, the result holds when \( e \in \varphi^{\ast}(W) \).  
\end{proof}

\begin{lem}\label{PlusAndStar}
We have \( W^{+} = W^{(I)} N^{l}_W \) and \( W^{\ast} = W^{(I)} N^{r}_W \).
Moreover, we have \( W = W^{+}W^{\ast} \).
\end{lem}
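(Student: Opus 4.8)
The plan is to establish the two explicit formulas first and then deduce the factorization $W=W^{+}W^{\ast}$ as a short algebraic check. Two structural facts from the excerpt will be used repeatedly: distinct segments $W_{k},W_{k'}$ of the canonical form commute, and every non-blocker segment is idempotent (Lemmas~\ref{idemNonBlocker} and~\ref{idem}). Combined with the separation conditions (CF2)--(CF4), which force the index ranges of distinct segments to lie at least two apart, these yield all the commutations I will need: $W^{(I)}$ is a product of pairwise commuting idempotent segments and is therefore idempotent; each of $N^{l}_{W}$ and $N^{r}_{W}$ is a product of the pairwise commuting idempotents $a_{i_{k}}$ (resp.\ $a_{i_{k}+1}$) and is idempotent; and the letters coming from distinct blockers, as well as the blocker letters and the non-blocker segments, all commute.

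To prove $W^{+}=W^{(I)}N^{l}_{W}$ I would first check that $e_{0}:=W^{(I)}N^{l}_{W}$ is an idempotent lying in $\varphi^{+}(W)$, and then that it is the distinguished (maximal) element of $\varphi^{+}(W)$. Its segments are exactly the non-blocker segments of $W$ together with the single letters $a_{i_{k}}$ coming from the blockers, so every segment of $W$ is covered by $e_{0}$ and each blocker contributes precisely its left letter $a_{i_{k}}$. By Lemma~\ref{PlusAndStarPrin}, any element of $\varphi^{+}(W)$ has its segments covered by those of $W$ with blockers forced to contribute their left letters, so $e_{0}$ is the largest such idempotent; hence $e_{0}=W^{+}$. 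The identity $W^{\ast}=W^{(I)}N^{r}_{W}$ follows by the symmetric argument, using the $\varphi^{\ast}$ clause of Lemma~\ref{PlusAndStarPrin}, in which each blocker contributes its right letter $a_{i_{k}+1}$.

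For the factorization I would compute directly. Since $N^{l}_{W}$ commutes with $W^{(I)}$ and $W^{(I)}$ is idempotent,
\[
W^{+}W^{\ast}=W^{(I)}N^{l}_{W}\,W^{(I)}N^{r}_{W}=\bigl(W^{(I)}\bigr)^{2}N^{l}_{W}N^{r}_{W}=W^{(I)}N^{l}_{W}N^{r}_{W}.
\]
Because all letters from distinct blockers commute, the two products can be interleaved into one factor per blocker, giving $N^{l}_{W}N^{r}_{W}=\prod_{k\in N_{W}}a_{i_{k}}a_{i_{k}+1}=\prod_{k\in N_{W}}W_{k}$. Reordering the commuting segments then yields $W^{+}W^{\ast}=W^{(I)}\prod_{k\in N_{W}}W_{k}=\prod_{k\in I_{W}}W_{k}\cdot\prod_{k\in N_{W}}W_{k}=\prod_{k=1}^{m}W_{k}=W$.

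I expect the main obstacle to be the identification step rather than the final computation: one must verify that $W^{(I)}N^{l}_{W}$ and $W^{(I)}N^{r}_{W}$ genuinely lie in $\varphi^{+}(W)$ and $\varphi^{\ast}(W)$ and are the distinguished elements there, which is exactly where Lemma~\ref{PlusAndStarPrin} and the careful use of the separation conditions (CF2)--(CF4) are essential. Once $W^{+}$ and $W^{\ast}$ are pinned down as these explicit idempotents, the factorization $W=W^{+}W^{\ast}$ reduces to the commutation-and-idempotency manipulation displayed above.
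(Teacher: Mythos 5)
Your proposal follows essentially the same route as the paper's proof: exhibit \( e_0 = W^{(I)} N^{l}_W \) as an idempotent lying in \( \varphi^{+}(W) \), pin it down as the extremal element of \( \varphi^{+}(W) \) via Lemma~\ref{PlusAndStarPrin}, argue symmetrically for \( W^{\ast} \), and deduce \( W = W^{+}W^{\ast} \) by the commutation-and-idempotency computation (which you spell out explicitly, whereas the paper dismisses it as straightforward; your computation is correct, since distinct segments commute, \( W^{(I)} \) is idempotent, and \( N^{l}_W N^{r}_W \) interleaves into \( \prod_{k \in N_W} a_{i_k}a_{i_k+1} = \prod_{k \in N_W} W_k \)).

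One correction is needed in the identification step: \( W^{+} \) is the \emph{minimum} of \( \varphi^{+}(W) \) in the natural partial order on idempotents, not the maximum. A literal reading of your claim that \( e_0 \) is the ``largest such idempotent'' fails, because \( LC_n \) is a monoid and the identity \( 1 \) always belongs to \( \varphi^{+}(W) \) and is the largest element there. What your coverage argument actually yields---and what the paper proves---is the absorption identity \( W^{(e)}_k \, e_0 = e_0 \, W^{(e)}_k = e_0 \) for every segment \( W^{(e)}_k \) of any \( e \in \varphi^{+}(W) \), obtained from Lemma~\ref{PlusAndStarPrin} (segments of \( e \) are covered by segments of \( W \), with blockers forced to contribute their left letter) combined with Lemma~\ref{idemNonBlocker}; this gives \( e_0 \leq e \), so \( e_0 \) is the smallest element of \( \varphi^{+}(W) \), hence equals \( W^{+} \). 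In short, ``maximal coverage'' corresponds to being \emph{minimal} in the natural order, and with that wording inverted your argument coincides with the paper's.
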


\begin{proof}
In the case where \( N_W \) is empty, by Lemma~\ref{idem}, it follows that \( W \) is idempotent, meaning that \( W^{+} = W^{\ast} = W \). 
Also, we have \( W = W^{(I)} \), and both \( N^{l}_W \) and \( N^{r}_W \) are the empty word. This completes the proof of the lemma for this case.

Now, suppose that \( N_W \) is non-empty. Again, by Lemma~\ref{idem} implies that \( W^{(I)} N^{l}_W \) and \( W^{(I)} N^{r}_W \) are idempotent.

It is clear that $W^{(I)} N^{l}_WW=W$, and thus $W^{(I)} N^{l}_W \in \varphi^{+}(W)$.
Let \( e = W^{(e)}_1 W^{(e)}_2 \cdots W^{(e)}_{m_e} \) be a non-empty idempotent word in canonical form, where \( e \in \varphi^{+}(W) \).  
By Lemma~\ref{PlusAndStarPrin}, we have \[ W^{(e)}_k W^{(I)} N^{l}_W = W^{(I)} N^{l}_WW^{(e)}_k = W^{(I)} N^{l}_W, \] for every \( 1 \leq k \leq m_e \), and thus \( W^{(I)} N^{l}_W \leq e \).  
Therefore, \( W^{+} = W^{(I)} N^{l}_W \).

Similarly, it follows that \( W^{\ast} = W^{(I)} N^{r}_W \).

Finally, it is straightforward to verify that \( W = W^{+} W^{\ast} \).
\end{proof}

It is clear that \( W^{(I)} N^{l}_W = N^{l}_W W^{(I)} \) and \( W^{(I)} N^{r}_W = N^{r}_W W^{(I)} \).

Lemma~\ref{PlusAndStar} implies the following Corollary.

\begin{cor}\label{singletonrich}
The monoid \( LC_n \) is singleton-rich.
\end{cor}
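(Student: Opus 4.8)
The plan is to obtain the corollary as a direct consequence of Lemma~\ref{PlusAndStar}, after unwinding the definition of a singleton-rich monoid from~\cite{Sha-Det2}. Recall that this definition requires, for every element \( W \), that the set \( \varphi^{+}(W) \) of idempotents \( e \) satisfying \( eW = W \) possess a least element \( W^{+} \) in the natural partial order on idempotents, that the set \( \varphi^{\ast}(W) \) of idempotents satisfying \( We = W \) possess a least element \( W^{\ast} \), and that \( W \) factor as \( W = W^{+}W^{\ast} \).

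First I would fix an arbitrary \( W \in LC_n \) and pass to its canonical form \( W_1 \cdots W_m \), which the preceding development makes available; from this form the idempotents \( W^{(I)}N^{l}_W \) and \( W^{(I)}N^{r}_W \) are constructed explicitly. Lemma~\ref{PlusAndStar} identifies these with \( W^{+} \) and \( W^{\ast} \). Concretely, that lemma shows \( W^{(I)}N^{l}_W \in \varphi^{+}(W) \) and (through Lemma~\ref{PlusAndStarPrin}) that \( W^{(I)}N^{l}_W \leq e \) for every \( e \in \varphi^{+}(W) \), with the dual statement for \( W^{\ast} \); this is precisely the existence-and-minimality clause. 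The remaining requirement, the factorization \( W = W^{+}W^{\ast} \), is the closing assertion of the same lemma. Since \( W \) was arbitrary, every element of \( LC_n \) meets the definition, so \( LC_n \) is singleton-rich.

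The substantive work has already been discharged in proving Lemma~\ref{PlusAndStar} (and Lemma~\ref{PlusAndStarPrin} behind it); what remains is only to check, clause by clause, that the data recorded there match the definition of singleton-richness, which is immediate. The one point demanding attention is simply aligning the exact formulation in~\cite{Sha-Det2} with the language of minimal idempotent one-sided identities used here, after which no computation is needed.
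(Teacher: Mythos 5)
Your proposal is correct and follows exactly the paper's route: the paper derives this corollary directly from Lemma~\ref{PlusAndStar} (with Lemma~\ref{PlusAndStarPrin} supplying the minimality of \( W^{(I)}N^{l}_W \) and \( W^{(I)}N^{r}_W \) among the idempotents in \( \varphi^{+}(W) \) and \( \varphi^{\ast}(W) \)), together with the factorization \( W = W^{+}W^{\ast} \). Your clause-by-clause matching against the definition of singleton-richness in~\cite{Sha-Det2} is precisely the intended, and essentially only, content of the deduction.
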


The following lemma examines the conditions under which \( W \ll V \).

\begin{lem}\label{llPrim}
We have \( W \ll V \) if and only if, for every $1\leq k\leq m_v$,
the following conditions hold:
\begin{itemize}
\item if \( k \in I_V \) and \( i^{(V)}_k < j^{(V)}_k \), then there exists an integer \(k'\in I_W\) such that \( V_k \) is covered by \( W_{k'} \).
\item if \( k \in I_V \) and \( i^{(V)}_k = j^{(V)}_k \), then either there exists an integer \(k'\in I_W\) such that \( V_k \) is covered by \( W_{k'} \), or there exists \( k' \in N_W \) such that \( i^{(V)}_k = i^{(W)}_{k'} \) or \( i^{(V)}_k = i^{(W)}_{k'} + 1 \).
\item if \( k \in N_V \), then either there exists an integer \(k'\in I_W\) such that \( V_k \) is covered by \( W_{k'} \), or there exists \( k' \in N_W \) such that \( i^{(V)}_k = i^{(W)}_{k'} \).
\end{itemize}
\end{lem}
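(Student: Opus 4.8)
The plan is to reduce the two-sided relation \( W \ll V \) to a single multiplicative identity and then to evaluate that identity on canonical forms. By the definition of \(\ll\) in~\cite{Sha-Det2} together with Lemma~\ref{PlusAndStar}, I expect testing \( W \ll V \) to amount to checking the sandwich product \( W^{+} V W^{\ast} = W \), where \( W^{+} = W^{(I)} N^{l}_W \) and \( W^{\ast} = W^{(I)} N^{r}_W \). First I would record the content constraint: since no letter of a product is ever cancelled in \(LC_n\), we have \( c(W^{+} V W^{\ast}) = c(W) \cup c(V) \), so the identity can hold only if \( c(V) \subseteq c(W) \). By Proposition~\ref{WSegments} this forces every letter of \(V\) to lie in the index-range of some segment of \(W\), which is the skeleton common to conditions (1)--(3).

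Next I would exploit the fact, recorded after Proposition~\ref{WSegments}, that distinct segments of a canonical word commute. This localises the computation: grouping the factors of \( W^{+} \) and \( W^{\ast} \) by the segment of \(W\) into whose range they fall, the product \( W^{+} V W^{\ast} \) factors, up to commuting reorderings, as a product over the segments \(W_{k'}\) of \(W\), each flanked by the \(V\)-segments that are covered by it. It therefore suffices to analyse, one segment of \(W\) at a time, the effect of inserting the covered \(V\)-segments between its left and right idempotent shadows.

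The heart of the argument is then a segment-by-segment case analysis along the three bullets, using Lemma~\ref{idemNonBlocker} for absorption. If \(V_k\) is a non-blocker with \( i^{(V)}_k < j^{(V)}_k \), a blocker \(W_{k'}\) cannot serve: its range has length one (too short for a long \(V_k\)), and for the length-one idempotent \(a_{i+1}a_i\) a blocker would force the wrong orientation, so only a covering non-blocker works, into which Lemma~\ref{idemNonBlocker} absorbs \(V_k\). If \(V_k=a_i\) is a singleton, absorption into a covering non-blocker is immediate; if instead \(a_i\) equals a letter of a blocker \(W_{k'}=a_{i'}a_{i'+1}\), the computations \(a_{i'}(a_{i'}a_{i'+1})=a_{i'}a_{i'+1}\) and \((a_{i'}a_{i'+1})a_{i'+1}=a_{i'}a_{i'+1}\) show that the left shadow absorbs it when \(i=i'\) and the right shadow when \(i=i'+1\), explaining why either letter is allowed. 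If \(V_k=a_ia_{i+1}\) is a blocker, it is absorbed by a covering non-blocker, or, against the equal blocker \(W_{k'}=a_ia_{i+1}\), the shadows \(a_i\) and \(a_{i+1}\) reproduce it. In every admissible case \(V_k\) vanishes and \(W_{k'}\) is reproduced unchanged, so multiplying over all segments gives \( W^{+} V W^{\ast} = W \); this yields the reverse implication.

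For the forward implication I would argue by contraposition: if the condition attached to some \(V_k\) fails, then \(V_k\) cannot be absorbed. Either a letter of \(V_k\) lies outside \(c(W)\), so the content of the product strictly exceeds \(c(W)\); or its letters survive in a configuration absent from the canonical form of \(W\) --- most delicately, a reversed pair \(a_{i+1}a_i\) is produced in place of the blocker \(a_ia_{i+1}\) (via \(a_ia_{i+1}a_ia_{i+1}=a_{i+1}a_i\)), or a covered non-blocker is forced to extend or merge across the mandatory gap of two. In each case the canonical form of \( W^{+} V W^{\ast} \) differs from that of \(W\), so by the uniqueness of canonical forms established above \( W^{+} V W^{\ast} \neq W \), i.e.\ \( W \not\ll V \). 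I expect the main obstacle to be exactly this blocker bookkeeping: tracking the orientation \(a_ia_{i+1}\) versus \(a_{i+1}a_i\) (the non-idempotent versus idempotent length-one segments of Lemma~\ref{idem}) and verifying that absorption near the boundary between two \(W\)-segments, separated by the minimal gap of Proposition~\ref{WSegments}, never silently merges them; the cyclic wrap-around of indices modulo \(n\) must be carried along throughout but introduces no new phenomena.
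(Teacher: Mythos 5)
Your overall strategy --- reducing \( W \ll V \) to the sandwich identity \( W^{+} V W^{\ast} = W \), localising via the commutation of well-separated segments, and absorbing each \( V_k \) by Lemma~\ref{idemNonBlocker} together with the explicit blocker computations --- is exactly the paper's route, and your segment-by-segment case analysis (including the orientation obstruction \( a_i a_{i+1} a_i a_{i+1} = a_{i+1} a_i \) and the shadow computations \( a_{i'} a_{i'} a_{i'+1} = a_{i'} a_{i'+1} = a_{i'} a_{i'+1} a_{i'+1} \)) is sound. However, there is a genuine error at the foundation of your forward direction: the claim that ``no letter of a product is ever cancelled in \( LC_n \)'', hence \( c(W^{+} V W^{\ast}) = c(W) \cup c(V) \) and so \( c(V) \subseteq c(W) \), is false. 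The defining relations cancel letters: by Lemma~\ref{relation3}, \( a_i a_{i+2} a_{i+1} = a_i a_{i+2} \), so taking \( W = a_1 a_3 \) and \( V = a_2 \) (with \( n \geq 5 \), so \( W \) is a non-zero idempotent with \( W^{+} = W^{\ast} = W \)) one gets \( W^{+} V W^{\ast} = a_1 a_3 \, a_2 \, a_1 a_3 = a_1 a_3 = W \), i.e.\ \( W \ll V \) even though \( a_2 \notin c(W) \). This is not a peripheral slip: ``covered'' in the lemma means containment of \emph{index ranges}, not of contents, and the absorption of in-range letters that do not occur in \( W \) is precisely why range coverage, rather than content inclusion, is the correct condition. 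Indeed this example satisfies the second bullet of the statement (the singleton \( V_1 = a_2 \) is covered by \( W_1 = a_1 a_3 \in I_W \)), so your proposed ``skeleton'' contradicts the very lemma you are proving.

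Consequently the key step of your contraposition --- ``either a letter of \( V_k \) lies outside \( c(W) \), so the content of the product strictly exceeds \( c(W) \)'' --- is invalid: content can shrink under the relations, so an out-of-content letter does not by itself force \( W^{+} V W^{\ast} \neq W \). What must be shown (and what the paper's own, admittedly terse, proof does) is that failure of \emph{range} coverage leaves a trace in the canonical form: a letter of \( V_k \) whose index lies outside every segment range of \( W \), or inside the mandatory gap between two consecutive \( W \)-segments, survives reduction and creates a new or enlarged segment, while a mismatched blocker produces the reversed pair \( a_{i+1} a_i \) in place of \( a_i a_{i+1} \) (equivalently, in the paper's phrasing, \( a_{i^{(V)}_k+1} \in N^{l}_W \) forces \( a_{i^{(V)}_k} \notin N^{r}_W \)); by uniqueness of canonical forms this yields \( W^{+} V W^{\ast} \neq W \). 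If you replace your content-based dichotomy by this range-based one, the remainder of your argument --- the absorption cases, the blocker orientation bookkeeping, and the cyclic wrap-around remark --- goes through essentially as in the paper.
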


\begin{proof}
By Lemma~\ref{PlusAndStar}, we have \( W = W^{+}W^{\ast} = W^{(I)} N^{l}_W N^{r}_W W^{(I)} \).  
Since \( W^{(I)} \) commutes with both \( N^{l}_W \) and \( N^{r}_W \),  
if \( V_k \), for some \( 1 \leq k \leq m_v \), satisfies the conditions specified in the lemma, it follows that  
\[
W = W^{(I)} N^{l}_W N^{r}_W W^{(I)} = W^{(I)} N^{l}_W V_k N^{r}_W W^{(I)}.
\]  
Therefore, we conclude that \( W \ll V \). 

Now, suppose that \( W \ll V \).  

Let \( 1 \leq k \leq m_v \).  
Assume \( j^{(V)}_k - i^{(V)}_k > 2 \).  
If there does not exist an integer \( 1 \leq k' \leq m_W \) such that  
\( i^{(W)}_{k'} \leq i^{(V)}_k \leq j^{(V)}_k \leq j^{(W)}_{k'} \),  
then \( W^{+} V W^{\ast} \neq W \), leading to a contradiction.  
Thus, it must be that such a \( k' \) exists, and consequently, \( V_k \) is covered by \( W_{k'} \).

Now, assume that \( j^{(V)}_k - i^{(V)}_k = 2 \) and \( V_k \) is not a blocker.  
Hence, \( V_k = a_{i^{(V)}_k+1}a_{i^{(V)}_k} \).  
If \( a_{i^{(V)}_k+1} \in N^{l}_W \), then \( a_{i^{(V)}_k} \not\in N^{r}_W \), and thus \( W^{+} V W^{\ast} \neq W \).  
Similarly, if \( a_{i^{(V)}_k} \in N^{r}_W \), then \( a_{i^{(V)}_k+1} \not\in N^{l}_W \), leading to a contradiction.  
Therefore, there must exist an integer \( 1 \leq k' \leq m_w  \) such that  
\( i^{(W)}_{k'} \leq i^{(V)}_k \leq j^{(V)}_k \leq j^{(W)}_{k'} \).  
In this case, \( V_k \) is covered by \( W_{k'} \).

If \( V_k \) is a blocker, then there must exist an integer \( 1 \leq k' \leq m_w \) such that either \( k' \in I_W \) and \( V_k \) is covered by \( W_{k'} \), or \( k' \in N_W \) and \( i^{(V)}_k = i^{(W)}_{k'} \).

Finally, assume that \( j^{(V)}_k = i^{(V)}_k \). In this case, \( k \in I_V \).  
To satisfy \( W = W^{(I)} N^{l}_W N^{r}_W W^{(I)} \), the second condition must hold.  
\end{proof}

\begin{thm}\label{lltransitive}
The monoid \( LC_n \) is $\ll$-transitive.
\end{thm}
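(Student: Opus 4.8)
The plan is to derive transitivity directly from the combinatorial characterization of $\ll$ in Lemma~\ref{llPrim}, rather than manipulating the algebraic identity $W = W^{+}VW^{\ast}$. Assume $W \ll V$ and $V \ll U$, and write the canonical forms $U = U_1 \cdots U_{m_u}$, $V = V_1 \cdots V_{m_v}$, $W = W_1 \cdots W_{m_w}$. To conclude $W \ll U$ I would fix an arbitrary segment $U_k$ of $U$ and verify, for that segment, the three bullet conditions of Lemma~\ref{llPrim} relative to $W$. The engine of the whole argument is that the relation ``$X$ is covered by $Y$'', i.e.\ $i^{(Y)} \le i^{(X)} \le j^{(X)} \le j^{(Y)}$ read clockwise on the circle of $n$ positions, is transitive: it is containment of arcs of length less than $n$, so if $U_k$ is covered by $V_{k'}$ and $V_{k'}$ is covered by $W_{k''}$, then $U_k$ is covered by $W_{k''}$.

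First I would dispose of the generic case where $k \in I_U$ with $i^{(U)}_k < j^{(U)}_k$. Applying the first bullet of Lemma~\ref{llPrim} to $V \ll U$ produces $k' \in I_V$ with $U_k$ covered by $V_{k'}$; this forces $i^{(V)}_{k'} < j^{(V)}_{k'}$, so the first bullet of Lemma~\ref{llPrim} applied to $W \ll V$ produces $k'' \in I_W$ covering $V_{k'}$. Transitivity of covering then gives $U_k$ covered by $W_{k''}$, which is precisely the first condition for $U_k$ relative to $W$.

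The remaining work is a case analysis of the boundary situations, namely when $U_k$ is a singleton ($i^{(U)}_k = j^{(U)}_k$) or a blocker ($k \in N_U$), and where the intermediate witness $V_{k'}$ may itself be a singleton or blocker. In each case I would invoke the matching bullet of Lemma~\ref{llPrim} for $V \ll U$ to obtain a witness (either a covering segment in $I_V$, or a segment in $N_V$ satisfying an index equation such as $i^{(U)}_k \in \{i^{(V)}_{k'}, i^{(V)}_{k'}+1\}$), then feed that witness into the corresponding bullet of Lemma~\ref{llPrim} for $W \ll V$, tracking whether the resulting $W$-witness lies in $I_W$ or in $N_W$. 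The bookkeeping key is that a blocker $V_{k'}$ has content $\{a_{i^{(V)}_{k'}}, a_{i^{(V)}_{k'}+1}\}$, so if it is covered by some $W_{k''} \in I_W$ then any index in $\{i^{(V)}_{k'}, i^{(V)}_{k'}+1\}$, hence $i^{(U)}_k$, lies in $[\,i^{(W)}_{k''}, j^{(W)}_{k''}\,]$ and $U_k$ is covered by $W_{k''}$; whereas if the $W$-witness is itself a blocker in $N_W$ with $i^{(W)}_{k''} = i^{(V)}_{k'}$, then the required relation $i^{(U)}_k \in \{i^{(W)}_{k''}, i^{(W)}_{k''}+1\}$ is inherited by composing the two index equations. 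Running this through each of the alternatives reproduces exactly the condition of Lemma~\ref{llPrim} for $U_k$ relative to $W$.

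Having verified the appropriate condition for every segment $U_k$, Lemma~\ref{llPrim} yields $W \ll U$, completing the proof. I expect the main obstacle to be precisely this boundary case analysis: keeping straight, in the singleton and blocker cases, which of the two alternatives of each bullet fires at the $V \ll U$ stage and at the $W \ll V$ stage, and checking that every composition of alternatives lands in the correct set ($I_W$ versus $N_W$) with a compatible index equation, since these are the places where arc-containment transitivity alone does not suffice and the $\pm 1$ index relations must be chased carefully. A short separate check is also needed that arc-containment stays transitive when segments wrap around position $n$ back to $1$, which is guaranteed because every segment has length strictly less than the cycle.
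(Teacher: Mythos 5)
Your proposal is correct, but it takes a genuinely different route from the paper. The paper's proof is algebraic: from \( W \ll V \ll U \) it writes \( W = W^{+}V^{+}UV^{\ast}W^{\ast} \) via Lemma~\ref{PlusAndStar}, and then absorbs the factors contributed by \( V \) segment by segment, using the decomposition \( V^{+} = V^{(I)}N^{l}_{V} \) together with the forward direction of Lemma~\ref{llPrim} applied to \( W \ll V \), until it reaches \( W = W^{+}UW^{\ast} \); in one delicate sub-case (a singleton \( V_k \) with \( i^{(V)}_k = i^{(W)}_{k'}+1 \)) it must additionally invoke \( V \ll U \) to argue that \( a_{i^{(V)}_k} \) commutes with \( U \), so the absorption is not purely formal. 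You instead use Lemma~\ref{llPrim} as a two-way combinatorial characterization and compose the witnesses: each segment \( U_k \) has a witness in \( V \) (by \( V \ll U \)), which in turn has a witness in \( W \) (by \( W \ll V \)). I checked your case chase and every composition closes as you predict: in the generic case coverage forces \( i^{(V)}_{k'} < j^{(V)}_{k'} \), so the first bullet fires and arc-containment transitivity finishes; a singleton or blocker of \( U \) matched to a blocker \( V_{k'} \) that is covered by some \( W_{k''} \in I_W \) is itself covered, since the blocker's span \( \{i^{(V)}_{k'}, i^{(V)}_{k'}+1\} \) lies in \( [i^{(W)}_{k''}, j^{(W)}_{k''}] \); and blocker-to-blocker matching composes the index equations into exactly the alternatives \( i^{(U)}_k = i^{(W)}_{k''} \) or \( i^{(U)}_k = i^{(W)}_{k''}+1 \) required by the second and third bullets. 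Your route buys a proof that avoids all word manipulation and the commutation argument, at the cost of leaning on both directions of Lemma~\ref{llPrim} (which the paper does supply as a biconditional, so this is legitimate); the paper's route uses only the forward direction plus explicit algebra. The boundary case table is left implicit in your write-up, but each entry lands correctly, so what remains is expository rather than mathematical.
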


\begin{proof}
Let \( W \), \( V \), and \( U \) be non-empty and non-zero words in \( LC_n \) such that \( W \ll V \ll U \).
Then, by Lemma~\ref{PlusAndStar}, we have \[W=W^{+}V^{+}UV^{\ast}W^{\ast}=W^{(I)} N^{l}_WV^{(I)} N^{l}_VUV^{(I)} N^{r}_VW^{(I)} N^{r}_W.\]

Let \( V_1 V_2 \cdots V_{m_v} \) denote the canonical form of \( V \) over \( A \).

For \( 1 \leq k \leq m_v \), if \( k \in I_V \), then since \( W \ll V \), it follows from Lemma~\ref{llPrim} that
\begin{itemize}
\item If \( i^{(V)}_k < j^{(V)}_k \), then there exists an integer \( k' \in I_W \) such that \( V_k \) is covered by \( W_{k'} \). Then, we have $W^{(I)}V_k=W^{(I)}$.
\item if \( i^{(V)}_k = j^{(V)}_k \), then either there exists an integer \(k'\in I_W\) such that \( V_k \) is covered by \( W_{k'} \), or there exists \( k' \in N_W \) such that \( i^{(V)}_k = i^{(W)}_{k'} \) or \( i^{(V)}_k = i^{(W)}_{k'} + 1 \).

The former case implies \( W^{(I)} a_{i^{(V)}_k} = W^{(I)} \). 

Now, suppose that the latter case holds.  
If \( i^{(V)}_k = i^{(W)}_{k'} \), then we have \( N^{l}_W a_{i^{(V)}_k} = N^{l}_W \).  
Otherwise, \( i^{(V)}_k = i^{(W)}_{k'} + 1 \). Since \( V \ll U \) and \( i^{(V)}_k = j^{(V)}_k \), by Lemma~\ref{llPrim}, \( V_k \) can only cover members of \( I_U \). As \( V_k \) has only one letter, \( V_k \) commute with the segments of $U^{(I)}$. Then, we have \( a_{i^{(V)}_k} U = U a_{i^{(V)}_k} \). Furthermore, as \( a_{i^{(V)}_k} N^{r}_W = N^{r}_W \), it follows that  
\[
W^{+} a_{i^{(V)}_k} U V^{\ast} W^{(I)} N^{r}_W = W^{+} U V^{\ast} W^{(I)} N^{r}_W.
\]
\end{itemize}
Hence, we have $W^{+}V^{(I)} N^{l}_VUV^{\ast}W^{\ast}=W^{+}N^{l}_VUV^{\ast}W^{\ast}$.

Also, if \( k \in N_V \), again by Lemma~\ref{llPrim}, either there exists an integer \( k' \in I_W \) such that \( V_k \) is covered by \( W_{k'} \), or there exists \( k' \in N_W \) such that \( i^{(V)}_k = i^{(W)}_{k'} \). The former case implies \( W^{(I)} a_{i^{(V)}_k} = W^{(I)} \), and the latter case implies \( N^{l}_W a_{i^{(V)}_k} = N^{l}_W \). Consequently, we conclude that \( W^{(I)} N^{l}_W N^{l}_V = W^{(I)} N^{l}_W \).

Hence, we have \( W = W^{+} U V^{\ast} W^{\ast} \). Similarly, we have \( W = W^{+} U W^{\ast} \), and thus \( W \ll U \).

The result follows.
\end{proof}

\begin{lem}\label{ssharpt}
Let \( W \) and \( V \) be non-empty and non-zero words.  
We have \( W \sharp V \neq 0 \) if and only if the following conditions hold:
\begin{enumerate}
    \item For every \( 1 \leq k \leq m_w \):
    \begin{enumerate}
        \item If \( \abs{W_k} > 1 \) and \( k \in I_W \), then there exists an integer \( 1 \leq k' \leq m_v \) such that \( W_k = V_{k'} \).
        \item If \( \abs{W_k} = 1 \), then there exists an integer \( 1 \leq k' \leq m_v \) such that either:
        \begin{itemize}
            \item \( W_k = V_{k'} \), or
            \item \( V_{k'} \) is a blocker and \( W_k = a_{i_{k'}^{(V)}} \).
        \end{itemize}
        \item If \( k \in N_W \), then there exists an integer \( k' \in I_V \) such that \( \abs{V_{k'}} = 1 \) and \( a_{i_k^{(W)} + 1} = a_{i_{k'}^{(V)}} \).
    \end{enumerate}
    \item For every \( 1 \leq k \leq m_v \):
    \begin{enumerate}
        \item If \( \abs{V_k} > 1 \) and \( k \in I_V \), then there exists an integer \( 1 \leq k' \leq m_w \) such that \( V_k = W_{k'} \).
        \item If \( \abs{V_k} = 1 \), then there exists an integer \( 1 \leq k' \leq m_W \) such that either:
        \begin{itemize}
            \item \( V_k = W_{k'} \), or
            \item \( W_{k'} \) is a blocker and \( V_k = a_{i_{k'}^{(W)} + 1} \).
        \end{itemize}
        \item If \( k \in N_V \), then there exists an integer \( k' \in I_W \) such that \( \abs{W_{k'}} = 1 \) and \( a_{i_k^{(V)}} = a_{i_{k'}^{(W)}} \).
    \end{enumerate}
\end{enumerate}
\end{lem}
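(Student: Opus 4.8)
The plan is to unwind the operation $\sharp$ through the factorizations supplied by Lemma~\ref{PlusAndStar}, taking $W \sharp V = W^{\ast} V^{+}$. Writing $W^{\ast} = W^{(I)} N^{r}_W$ and $V^{+} = V^{(I)} N^{l}_V$, the product becomes $W^{(I)} N^{r}_W V^{(I)} N^{l}_V$, and, using that distinct segments commute together with the commutation identities $W^{(I)} N^{r}_W = N^{r}_W W^{(I)}$, I would reorganize it position by position around the circular arrangement of $1, \ldots, n$. This already explains the asymmetry between the two halves of the statement: the letters of $W$ that survive into $W^{\ast}$ are the non-blocker segments of $W^{(I)}$ together with the \emph{right} letters $a_{i^{(W)}_k + 1}$ of its blockers (those in $N^{r}_W$), whereas the letters of $V$ surviving into $V^{+}$ are the non-blocker segments of $V^{(I)}$ together with the \emph{left} letters $a_{i^{(V)}_k}$ of its blockers (those in $N^{l}_V$). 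Thus conditions (1a)--(1c) record exactly how each segment type of $W^{\ast}$ must be matched inside $V^{+}$, and (2a)--(2c) give the mirror requirement.

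For the sufficiency direction I would show that, under the stated conditions, every contribution collapses idempotently. A non-blocker segment $W_k$ with $\abs{W_k} > 1$ equals some $V_{k'}$ by (1a); by Lemma~\ref{idemNonBlocker} such a segment is idempotent and absorbs the coincident segment, so it contributes its own range and nothing more. A singleton of $W$, or a right-blocker letter $a_{i^{(W)}_k+1}$ from $N^{r}_W$, is by (1b) or (1c) either equal to a segment of $V$ or is swallowed by an adjacent $V$-segment (respectively by the left letter of a $V$-blocker) via Lemma~\ref{idemNonBlocker} and the identities~\eqref{LCn-relations-res1}. Processing the symmetric conditions (2a)--(2c) the same way, the product reduces to a word whose content is $c(W) \cup c(V)$ arranged into the same well-separated segments; crucially no new segment of the forbidden type $a_{i+1}a_i$ appears and the two-consecutive-letter gaps guaranteed by Proposition~\ref{WSegments} are preserved, so Condition~\ref{itm:star} still holds and $W \sharp V \neq 0$.

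For the necessity direction I would assume one condition fails and show the product equals the absorbing zero of Lemma~\ref{LCnZero}. If a length $>1$ non-blocker segment of $W$ has no equal segment in $V$ (failure of (1a)), or a singleton/blocker letter cannot be matched as in (1b)/(1c), then that letter lies strictly outside the coverage of every segment of $V^{+}$; multiplying it in enlarges the combined coverage, and, because the letter sits two or more steps from the nearest $V$-segment, it either produces a subword $a_{i+1}a_i$ that cannot be absorbed or merges neighbouring segments until the unique gap required by Condition~\ref{itm:star} is closed. Once no index $r$ with $a_{(r+1)\bmod n}, a_{(r+2)\bmod n} \notin c(\cdot)$ survives, the word fails Condition~\ref{itm:star} and hence, by Lemma~\ref{LCnZero}, equals the zero element.

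I expect the necessity direction to be the main obstacle. The delicate point is to show that closing a single gap is irreversible: one must verify, by a case analysis over the three segment types and over whether the offending letter is a left or a right blocker letter, that no subsequent application of relations~(1'), \eqref{LCn-relations}, or \eqref{LCn-relations-res1} can re-open a gap or dissolve an $a_{i+1}a_i$ block once it has formed. Tracking how the extra coverage propagates cyclically through several adjacent segments, and confirming that it genuinely forces the full absence of an admissible $r$, is where the combinatorial bookkeeping is heaviest.
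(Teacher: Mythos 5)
Your proposal founders on the definition of $\sharp$. In this paper (following the framework of~\cite{Sha-Det2}), $W \sharp V$ is a \emph{restricted} product: it is nonzero precisely when $W^{\ast} = V^{+}$, $(WV)^{+} = W^{+}$ and $(WV)^{\ast} = V^{\ast}$ hold simultaneously — not the ordinary monoid product $W^{\ast}V^{+}$, and ``$= 0$'' does not refer to the absorbing element of $LC_n$ from Lemma~\ref{LCnZero}. Under your reading the lemma is simply false: take $W = a_1$ and $V = a_4$ with $n \geq 6$. Then $W^{\ast}V^{+} = a_1a_4$ is a nonzero word in canonical form (it satisfies Condition~\ref{itm:star}), yet condition (1b) of the lemma fails, since no segment of $V$ equals $a_1$ and no blocker of $V$ has left letter $a_1$. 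So your necessity direction — showing that a failed condition drives the product all the way down to the zero of Lemma~\ref{LCnZero} — attempts to prove a false statement, and the ``combinatorial bookkeeping'' you flag as the main obstacle cannot close that gap because no such collapse occurs. The paper's actual proof is far shorter: it reads the conditions directly off the segment-by-segment comparison, via uniqueness of canonical forms, of $W^{\ast} = W^{(I)}N^{r}_{W}$ and $V^{+} = V^{(I)}N^{l}_{V}$ (Lemma~\ref{PlusAndStar}): a non-blocker segment of length $>1$ survives into $W^{\ast}$ and must literally coincide with a segment of $V^{+}$, a singleton must match a singleton segment of $V$ or the left letter of a $V$-blocker, and so on.

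A second, independent omission: you never invoke $(WV)^{+} = W^{+}$ or $(WV)^{\ast} = V^{\ast}$, which are indispensable for conditions (1c) and (2c). Mere equality $W^{\ast} = V^{+}$ would allow the right letter $a_{i_k^{(W)}+1}$ of a blocker $W_k$ of $W$ to be matched by the \emph{left} letter of a blocker $V_{k'} = a_{i_k^{(W)}+1}a_{i_k^{(W)}+2}$ of $V$, since both appear as singletons in $W^{\ast}$ and $V^{+}$ respectively; your ``asymmetry'' observation about right letters of $W$-blockers versus left letters of $V$-blockers is correct but does not exclude this configuration. The lemma does exclude it — (1c) demands $k' \in I_V$ with $\abs{V_{k'}} = 1$ — and the paper rules it out by noting that in the blocker--blocker case $(WV)^{+}$ would contain a segment covering $a_{i_k^{(W)}}a_{i_k^{(W)}+2}$, contradicting $(WV)^{+} = W^{+}$ because $k \in N_W$ means $W^{+}$ carries only $a_{i_k^{(W)}}$ there. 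To repair your proposal: start from the genuine definition of $\sharp$, derive conditions (1a)--(1c) and (2a)--(2c) from the canonical-form equality $W^{\ast} = V^{+}$ together with the $(WV)^{+} = W^{+}$ argument for the blocker exclusion, and for sufficiency verify the three defining equalities directly, as the paper does; Lemmas~\ref{idemNonBlocker} and the relations~\eqref{LCn-relations-res1} you cite then only play the modest supporting role of checking those identities.
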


\begin{proof}  
First, suppose that \( W \sharp V \neq 0 \).  

Let \( 1 \leq k \leq m_W \).  

Since \( W^{\ast} = V^{+} \), we analyze the following cases:  
\begin{enumerate}
    \item If \( \abs{W_k} > 1 \) and \( k \in I_W \), then \( W_k \) must appear in \( V \). Thus, there exists an integer \( 1 \leq k' \leq m_V \) such that \( W_k = V_{k'} \).  
    \item If \( \abs{W_k} = 1 \), then \( W_k \) either matches a corresponding component in \( V \) or aligns with a blocker in \( V \). Specifically, there exists \( 1 \leq k' \leq m_V \) such that either \( W_k = V_{k'} \) or \( V_{k'} \) is a blocker and \( W_k = a_{i_{k'}^{(V)}} \).  
\item If \( k \in N_W \), then there exists \( k' \in I_V \) such that \( \abs{V_{k'}} = 1 \) and \( a_{i_k^{(W)} + 1} = a_{i_{k'}^{(V)}} \), or \( V_{k'} \) is a blocker and \( a_{i_k^{(W)} + 1} = a_{i_{k'}^{(V)}} \).  
If \( V_{k'} \) is a blocker, then as \((WV)^{+} = W^{+}\), there must exist a segment in \( W \) that covers the sequence \( a_{i_k^{(W)}}  a_{i_k^{(W)}+2} \), because \( a_{i_k^{(W)} + 2} = a_{i_{k'}^{(V)}+1} \).  
This contradicts the assumption that \( k \in N_W \).
\end{enumerate}  

Analogously, for every \( 1 \leq k \leq m_V \), the corresponding conditions for \( V \) hold.

Conversely, if the conditions in the lemma hold, it is easy to verify that \( W^{\ast} = V^{+} \), \( (WV)^{+} = W^{+} \), and \( (WV)^{\ast} = V^{\ast} \).
\end{proof}

Suppose that \( W \ll V \). By Lemma~\ref{llPrim}, we identify elements \( k \in N_{W} \) that satisfy at least one of the following conditions and include them in the subset \( {N'}_{(W,V)} \subseteq N_{W} \):
\begin{enumerate}
    \item[(a)] There exists \( k' \in I_{V} \) such that \( i^{(V)}_{k'} = j^{(V)}_{k'} \) and \( i^{(V)}_{k'} = i^{(W)}_k + 1 \).
    \item[(b)] There exists \( k' \in N_{V} \) such that \( i^{(V)}_{k'} = i^{(W)}_k \).
\end{enumerate}

Let \( {N''}_{(W,V)} \) represent the complement of \( {N'}_{(W,V)} \) in \( N_{W} \), defined as  
\[
{N''}_{(W,V)} = N_{W} \setminus {N'}_{(W,V)}.
\]
Define 
\begin{align*}
{N'}^{l}_{(W,V)} = \prod_{k \in {N''}_{(W,V)}} a_{i_{k}}, \quad \text{and}\ {N'}^{r}_{(W,V)} = \prod_{k \in {N'}_{(W,V)}} a_{i_k+1}.
\end{align*}

\begin{lem}\label{NPrimeWandWPrime}
Let \( W'' \ll W' \ll W \).
We have \({N'}_{(W'',W)} \subseteq {N'}_{(W'',W')}\).
\end{lem}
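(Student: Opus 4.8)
The plan is to fix an index $k$ in the smaller set ${N'}_{(W'',W)}$ and show it also lies in ${N'}_{(W'',W')}$; since both sets are subsets of $N_{W''}$, this yields the inclusion. First note both sets are defined: ${N'}_{(W'',W')}$ uses $W''\ll W'$, while ${N'}_{(W'',W)}$ uses $W''\ll W$, which holds by $\ll$-transitivity (Theorem~\ref{lltransitive}). So take $k\in {N'}_{(W'',W)}\subseteq N_{W''}$ and write $i=i^{(W'')}_k$, so that $W''_k$ is the blocker $a_ia_{i+1}$. By the definition of ${N'}_{(W'',W)}$, one of the following holds: \textbf{(a)} $W$ has a single-letter (non-blocker) segment equal to $a_{i+1}$, or \textbf{(b)} $W$ has a blocker segment $a_ia_{i+1}$ (an index of $N_W$ with starting position $i$). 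I must deduce the analogous statement for $W'$, namely that $W'$ has a single-letter segment $a_{i+1}$ or a blocker $a_ia_{i+1}$, which is exactly $k\in {N'}_{(W'',W')}$.

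The two tools are Lemma~\ref{llPrim} applied to each comparability $W'\ll W$ and $W''\ll W'$, together with one structural fact about canonical forms from Proposition~\ref{WSegments}: the segments of a canonical word occupy pairwise disjoint index ranges. In particular, since $W''$ has a blocker occupying exactly $\{i,i+1\}$, no other segment of $W''$ can have a range meeting $\{i,i+1\}$; consequently $W''$ has no non-blocker segment whose range contains $i$ or $i+1$, and no blocker starting at $i+1$. This observation is what I will use to discard the unwanted subcases.

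In Case (b), applying Lemma~\ref{llPrim} to $W'\ll W$ at the blocker segment of $W$ gives two possibilities: either $W'$ already has a blocker $a_ia_{i+1}$, in which case we are done, or this blocker of $W$ is covered by a non-blocker segment $W'_q$ of $W'$, whose range then contains $\{i,i+1\}$ so that $i^{(W')}_q<j^{(W')}_q$. In the latter case I apply the first bullet of Lemma~\ref{llPrim} to $W''\ll W'$ at $W'_q$: it must be covered by a non-blocker segment of $W''$ whose range contains $\{i,i+1\}$, contradicting the structural fact above. In Case (a), applying Lemma~\ref{llPrim} to $W'\ll W$ at the single-letter segment $a_{i+1}$ of $W$ gives three possibilities: $a_{i+1}$ is covered by a non-blocker segment $W'_q$ of $W'$, or $W'$ has a blocker starting at $i$, or $W'$ has a blocker starting at $i+1$. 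The middle possibility is exactly a blocker $a_ia_{i+1}$ in $W'$, so we are done. In the first possibility, either $W'_q$ is the single letter $a_{i+1}$ (done, giving the single-letter segment in $W'$) or it properly contains $i+1$ in its range, and then $W''\ll W'$ again forces a non-blocker segment of $W''$ meeting $i+1$, a contradiction. The last possibility (a blocker $a_{i+1}a_{i+2}$ in $W'$) is ruled out by the third bullet of Lemma~\ref{llPrim} applied to $W''\ll W'$: it would force $W''$ to contain a segment meeting $i+1$, again contradicting the structural fact. Hence in every surviving case $W'$ has a single-letter segment $a_{i+1}$ or a blocker $a_ia_{i+1}$, i.e.\ $k\in {N'}_{(W'',W')}$.

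The main obstacle is precisely the elimination of these "spreading" subcases, in which a feature of $W$ located at $\{i,i+1\}$ is realized in $W'$ by a longer covering segment, or by a blocker shifted to $\{i+1,i+2\}$, rather than sitting exactly at $\{i,i+1\}$. The idea that resolves all of them uniformly is that the blocker $W''_k=a_ia_{i+1}$, transported through $W''\ll W'$ via Lemma~\ref{llPrim}, pins down positions $i$ and $i+1$ and forbids any $W''$-segment from overlapping them; combined with the range-disjointness of canonical segments, this collapses every unwanted alternative into a contradiction. A minor point to handle carefully is that all indices are read modulo $n$ on the circular arrangement of Proposition~\ref{WSegments}, so the adjacencies "$i,i+1$" and the gap conditions must be interpreted cyclically throughout.
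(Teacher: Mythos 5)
Your proof is correct and takes essentially the same route as the paper's: the paper's own argument simply records conditions (a) and (b) for ${N'}_{(W'',W)}$ and then invokes Lemma~\ref{llPrim} for the two relations $W'\ll W$ and $W''\ll W'$, which is exactly the two-step transfer you perform. Your explicit elimination of the ``covering'' subcases --- using that the blocker $a_ia_{i+1}$ of $W''$ occupies $\{i,i+1\}$ and that canonical segments have disjoint, well-separated ranges, so no non-blocker segment of $W''$ can meet these positions --- is precisely the case analysis the paper leaves implicit, so nothing needs correcting.
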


\begin{proof} 
Let \(k \in {N'}_{(W'',W)}\). Then one of the following conditions holds:
\begin{enumerate}
    \item[(a)] There exists \( k' \in I_{W} \) such that \( i^{(W)}_{k'} = j^{(W)}_{k'} \) and \( i^{(W)}_{k'} = i^{(W'')}_k + 1 \).
    \item[(b)] There exists \( k' \in N_{W} \) such that \( i^{(W)}_{k'} = i^{(W'')}_k \).
\end{enumerate}
As \(W' \ll W \) and \( W'' \ll W'\), by Lemma~\ref{llPrim},
we have \(k \in {N'}_{(W'',W')}\).
\end{proof}

In~\cite{Sha-Det2}, \(\ll\)-smoothness is defined as follows:\\
A \(\ll\)-transitive singleton-rich semigroup \(S\) is said to be \(\ll\)-smooth if, for every sequence \(s''\ll s' \ll s\) and \(t''\ll t' \ll t\), the following conditions hold:
\begin{enumerate}
\item 
if \(s''\sharp t''\neq 0\), then we have \(\varphi^{({s''}^{+}s',t'{t''}^{\ast})} = \varphi({s''}^{+}s',t'{t''}^{\ast}).\) 
\item if \(s''\sharp t''\neq 0\), then we have \(({s''}^{+}s')^{\ast}t'{t''}^{\ast} = t''\) if and only if
 \[({s''}^{+}s')^{\ast}t{t''}^{\ast} = t''.\]
\item if \(s''({s''}^{+}s)^{\ast}=s''\), then we have \(s''({s''}^{+}s')^{\ast}=s''\).
\end{enumerate}

We first prove that \( LC_n \) is \( \ll \)-smooth. Then, in the next section, we compute its determinant.

\begin{thm}\label{llsmoothLCn}
The monoid \( LC_n \) is $\ll$-smooth.
\end{thm}

\begin{proof}  
By Corollary~\ref{singletonrich} and Theorem~\ref{lltransitive}, the monoid \( LC_n \) is a $\ll$-transitive and singleton-rich monoid.

Next, we verify that \( LC_n \) satisfies the remaining properties of $\ll$-smooth semigroups to confirm that it is indeed $\ll$-smooth.

Let the sequences  
\( W'' \ll W' \ll W \) and \( V'' \ll V' \ll V \) in \( LC_n \).  
We will now verify these properties one by one as follows:

(1)  
Assume that \( W'' \sharp V'' \neq 0 \).  
By Lemma~\ref{PlusAndStar}, we can write:  
\[
{W''}^{+} W' = {W''}^{(I)} N^{l}_{W''} W'\ \text{and}\ 
V'{V''}^{\ast} = V'{V''}^{(I)} N^{r}_{V''}.
\]

We have \( W'' \ll W' \).
Let
\begin{align*}
{N'}^{lr}_{(W'',W')} = \prod_{k \in {N'}_{(W'',W')}} a_{i_{k}}a_{i_k+1}.
\end{align*}

Then, we can write  
\[
{W''}^{+} W' = {W''}^{(I)} {N'}^{l}_{(W'',W')} {N'}^{lr}_{(W'',W')}\]
and
\[({W''}^{+} W')^{\ast} = {W''}^{(I)} {N'}^{l}_{(W'',W')} {N'}^{r}_{(W'',W')}.
\]


Also, we have \( V'' \ll V'\).

By Lemma~\ref{ssharpt}, we have \({W''}^{(I)} = {W''}^{(I)}N^{l}_{V''}\).
Then, we have \[{W''}^{(I)}V'{V''}^{\ast} = {W''}^{(I)}N^{l}_{V''}V'{V''}^{\ast} = {W''}^{(I)}N^{l}_{V''}V'{V''}^{(I)} N^{r}_{V''}.\]
Thus, by Lemma~\ref{llPrim}, we have 
\[{W''}^{(I)}V'{V''}^{\ast} = {W''}^{(I)}N^{l}_{V''}{V''}^{(I)} N^{r}_{V''}= {W''}^{(I)}{V''}^{(I)} N_{V''}.\]

Hence,
\begin{align*}
({W''}^{+} W')^{\ast}V'{V''}^{\ast} =
{W''}^{(I)} {N'}^{l}_{W''} {N'}^{r}_{W''}{V''}^{(I)}N_{V''}.
\end{align*}
By using twice Lemma~\ref{ssharpt}, we have
\begin{align*}
({W''}^{+} W')^{\ast}V'{V''}^{\ast}
& =
{W''}^{(I)} {N'}^{lr}_{(W'',W')} {N'}^{r}_{(W'',W')}{V''}^{(I)}N_{V''}\\
&=
{W''}^{(I)} {N'}^{lr}_{(W'',W')} {N'}^{r}_{(W'',W')}N_{V''}.
\end{align*}
Hence, again by Lemma~\ref{ssharpt},  we conclude that
\begin{align*}
\varphi({W''}^{+}W',V'{V''}^{\ast}) &= ({W''}^{(I)} {N'}^{lr}_{(W'',W')} {N'}^{r}_{(W'',W')}N_{V''})^{+}\\
&=
{W''}^{(I)} {N'}^{l}_{(W'',W')} {N'}^{r}_{(W'',W')}.
\end{align*}


Now, as 
\begin{align*}
&({W''}^{+}W'\varphi({W''}^{+}W',V'{V''}^{\ast}))^{\ast}=\\
&({W''}^{(I)} {N'}^{l}_{(W'',W')} {N'}^{lr}_{(W'',W')} {W''}^{(I)} {N'}^{l}_{(W'',W')} {N'}^{r}_{(W'',W')})^{\ast}=\\
&({W''}^{(I)} {N'}^{l}_{(W'',W')} {N'}^{lr}_{(W'',W')}  )^{\ast}=\\
&{W''}^{(I)} {N'}^{l}_{(W'',W')} {N'}^{r}_{(W'',W')}=
\varphi({W''}^{+}W',V'{V''}^{\ast})
\end{align*}
and as previously shown,
\begin{align*}
(\varphi({W''}^{+}W',V'{V''}^{\ast})V'{V''}^{\ast})^{+}
&=
({W''}^{(I)} {N'}^{l}_{(W'',W')} {N'}^{r}_{(W'',W')}V'{V''}^{\ast})^{+}\\
&=({W''}^{(I)} {N'}^{lr}_{(W'',W')} {N'}^{r}_{(W'',W')}V'{V''}^{\ast})^{+}\\
&={W''}^{(I)} {N'}^{l}_{(W'',W')} {N'}^{r}_{(W'',W')}\\
&=
\varphi({W''}^{+}W',V'{V''}^{\ast}),
\end{align*}
it follows that \(\varphi^{({W''}^{+}W',V'{V''}^{\ast})} = \varphi({W''}^{+}W',V'{V''}^{\ast}).\) 


(2)
Suppose that \( W'' \sharp V'' \neq 0 \).  

In (1), we established that  
\[
({W''}^{+} W')^{\ast} V {V''}^{\ast} = {W''}^{(I)} {N'}^{lr}_{(W'',W')} {N'}^{r}_{(W'',W')} N_{V''}.
\]  
Since the word \( {W''}^{(I)} {N'}^{lr}_{(W'',W')} {N'}^{r}_{(W'',W')} N_{V''} \) is independent of \( V \), it follows that  
\[
({W''}^{+} W')^{\ast} V {V''}^{\ast} = ({W''}^{+} W')^{\ast} V' {V''}^{\ast},
\]  
and thus (2) holds.


(3) 
Suppose that \(W''({W''}^{+}W)^{\ast}=W''\).


Since \( W'' \ll W \), we have \[
({W''}^{+} W)^{\ast} = {W''}^{(I)} {N'}^{l}_{(W'',W)} {N'}^{r}_{(W'',W)},
\]
and, thus,
\begin{align*}
W''({W''}^{+}W)^{\ast}
&=
W''{W''}^{(I)} {N'}^{l}_{(W'',W)} {N'}^{r}_{(W'',W)}\\
&=
{W''}^{(I)}\prod_{k \in {N}_{W''}} a_{i_{k}}a_{i_{k}+1}{W''}^{(I)} {N'}^{l}_{(W'',W)} {N'}^{r}_{(W'',W)}\\
&=
{W''}^{(I)}\prod_{k \in {N}_{W''}} a_{i_{k}}a_{i_{k}+1}{N'}^{l}_{(W'',W)} 
\end{align*}
As \(W''({W''}^{+}W)^{\ast}=W''\), we get that ${N'}^{l}_{(W'',W)}$ is an empty word and thus ${N''}_{(W'',W)}=\emptyset$.

By Lemma~\ref{NPrimeWandWPrime}, we have \({N'}_{(W'',W)} \subseteq {N'}_{(W'',W')}\) and, thus, ${N''}_{(W'',W')}= \emptyset$.
Hence, \(W''({W''}^{+}W')^{\ast} = W''\).
\end{proof}



\section{Determinant of Layered Catalan monoids}

In this section, we prove that the determinant of the Layered Catalan monoid is non-zero if and only if \( n < 8 \). 

Let \( e \) be an idempotent of \( S \). Define \( X_e \) as the set  
\[
X_e = \{ x_s \in X \mid s \in \widetilde{L}_e \widetilde{R}_e \}.
\]  
Let \( \theta_e(X_e) \) denote the determinant of the submatrix \( \widetilde{L}_e \times \widetilde{R}_e \) extracted from the Cayley table \( (S, \boldsymbol{*}) \).  

In the case where \( S \) is \( \ll \)-smooth, a theorem in~\cite[Theorem 5.11]{Sha-Det2} provides a method to compute the determinant of \( S \).

For $s \in S$, put \[y_s=\sum\limits_{t\ll s}\mu_S(t, s)x_t.\] 
Then, we have
\[
\theta_S(X)
= \pm\prod\limits_{e\in E(S)}\widetilde{\theta}_{e}(Y_e)
\]
where $Y_e=\{y_s\mid s \in \widetilde{L}_e\widetilde{R}_e\}$.
Moreover, the determinant of $S$ is non-zero if and only if $\widetilde{\theta}_{e}(Y_e)\neq 0$, for every idempotent $e$.

In Theorem~\ref{llsmoothLCn}, we prove that \(LC_n\) is \(\ll\)-smooth. Then, we use Theorem 5.11 in~\cite{Sha-Det2} to compute its determinant.

\begin{lem}\label{NumberOfPlusAndStar}
Let \( e \) be a non-zero and non-empty idempotent with the canonical form  
\( W_1 W_2 \cdots W_{m_e} \) over \( A \).  

Define \( l_1 \) and \( l_2 \) as follows:
\begin{align*}
    l_1 &= \abs{\big\{ W_i \mid \abs{W_i} = 1, \ \text{and}\ d(W_{i-1}, W_i) \geq 3 \big\}}, \\
    l_2 &= \abs{\big\{ W_i \mid \abs{W_i} = 1, \ \text{and}\ d(W_i, W_{i+1}) \geq 3 \big\}}.
\end{align*}
where \( d(W_a, W_b) \) denotes the number of letters in the distance between \( W_a \) and \( W_b \), given by \( d(W_a, W_b) = i_b - j_a + 1 \).
If \( e \) consists of only one segment, we assume that distances are measured cyclically within \( e \).  

Then, the following holds:
\[
\abs{\widetilde{L}_e} = 2^{l_1}, \quad \text{and} \quad \abs{\widetilde{R}_e} = 2^{l_2}.
\]
\end{lem}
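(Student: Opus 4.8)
The claim relates the number of elements $\ll$-below the idempotent $e$ on each side to a count of "isolated singleton" segments. By the structural results already proved, and in particular Lemma~\ref{PlusAndStar} together with Lemma~\ref{llPrim}, an element $W'' \ll e$ is determined by choosing, segment by segment, which parts of $e$ it "covers" or "touches". The plan is to show that the only genuine freedom in such a choice arises exactly at the singleton segments $W_i$ (with $\abs{W_i}=1$) that are separated from their neighbour by a gap of at least two letters, and that each such segment contributes a binary choice—hence the factor of $2$ and the total count $2^{l_1}$ (resp.\ $2^{l_2}$).

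First I would recall that $\widetilde{L}_e$ and $\widetilde{R}_e$ are, up to the equivalences used in~\cite{Sha-Det2}, parametrized by the idempotents $W''$ with $W'' \ll e$ and $e \ll W''$ respectively (more precisely by the relevant one-sided data ${W''}^{+}$, ${W''}^{\ast}$). Using Lemma~\ref{llPrim}, I would analyze what $W''$ can look like under $W'' \ll e$: for each segment $W_k$ of $e$ with $\abs{W_k}>1$ (necessarily non-blocker, since $e$ is idempotent by Lemma~\ref{idem}), the covering condition forces $W''$ to contain a segment covering $W_k$ with no additional freedom modulo the $\ll$-relation; the same holds for singleton segments that are \emph{not} well-separated, because the neighbouring segment's letters force the covering choice. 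The decisive case is a singleton segment $W_i = a_{i_k}$ whose neighbour lies at distance $\geq 3$: here Lemma~\ref{llPrim} permits $W''$ either to cover $a_{i_k}$ by a segment of $I_{W''}$, or to realize it via a blocker $a_{i_k}a_{i_k+1}$ (an element of $N_{W''}$), and these two possibilities give genuinely distinct contributions to ${W''}^{+}$ versus ${W''}^{\ast}$.

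The core of the argument is therefore a bijection: I would set up a map sending $W'' \in \widetilde{L}_e$ to the subset of $\{\,i : \abs{W_i}=1,\ d(W_{i-1},W_i)\geq 3\,\}$ consisting of those indices where $W''$ uses the blocker on the left side, and check it is well-defined and bijective using Lemma~\ref{ssharpt} (to control $\sharp$) and Lemma~\ref{NPrimeWandWPrime} (to guarantee the choices are independent across segments). Injectivity follows because distinct subsets produce distinct canonical forms for ${W''}^{+}$; surjectivity because for any such subset one can explicitly write down an idempotent $W''$ with $W''\ll e$ realizing exactly those blockers, the well-separatedness ($d\geq 3$) ensuring that inserting a blocker $a_{i_k}a_{i_k+1}$ does not collide with the adjacent segment (so that canonical form (CF2)–(CF4) are respected). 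The mirror-image count, using $d(W_i,W_{i+1})\geq 3$ and the right-hand data ${W''}^{\ast}$, $N^{r}$, gives $\abs{\widetilde{R}_e}=2^{l_2}$.

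The main obstacle I anticipate is the cyclic boundary case flagged in the statement—when $e$ consists of a single segment wrapping around the circle—where "distance" must be read cyclically and the first and last segment coincide; I would handle this by treating the single-segment situation separately and verifying directly that the distance $d(W_{i-1},W_i)$ is the cyclic gap, so that the same counting of well-separated singletons applies without double-counting the unique segment. A secondary technical point is ensuring that the two sides ($l_1$ versus $l_2$) really decouple, i.e.\ that the left-choices (governing ${W''}^{+}$) do not constrain the right-choices (governing ${W''}^{\ast}$); this should follow from the factorization $W''={W''}^{(I)}N^{l}_{W''}N^{r}_{W''}{W''}^{(I)}$ in Lemma~\ref{PlusAndStar}, in which the left and right blocker-data are recorded by the independent sets $N^{l}$ and $N^{r}$.
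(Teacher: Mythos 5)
Your proposal is correct and takes essentially the same route as the paper: by Lemma~\ref{PlusAndStar}, any \(W\in\widetilde{L}_e\) satisfies \(W^{\ast}=W^{(I)}N^{r}_W=e\), so every segment of \(e\) is forced to occur verbatim in \(W^{(I)}\) except the well-separated singleton segments, each of which independently either occurs directly or arises as the right letter of a blocker of \(W\) (the gap condition \(d\geq 3\) being exactly what lets the blocker fit the canonical form), yielding \(2^{l_1}\) choices and dually \(2^{l_2}\) — your subset bijection is just a formalization of this count. Two label slips worth fixing in a write-up: \(\widetilde{L}_e\) consists of the words \(W\) with \(W^{\ast}=e\) (these are generally not idempotents and are not characterized by \(W\ll e\), so Lemma~\ref{llPrim} is not the operative tool — Lemma~\ref{PlusAndStar} is), and it is the right-blocker data \(N^{r}_W\) that governs \(\widetilde{L}_e\) while \(N^{l}_W\) governs \(\widetilde{R}_e\), the opposite of the pairing you state, though by symmetry this does not affect the final count.
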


\begin{proof}
Let \( e_1 \) be the set of segments \( W_i \) in \( e \) such that \( \abs{W_i} = 1 \) and  
\[
d(W_{i-1}, W_i) \geq 3.
\]  
Define \( e_2 \) as the complement of \( e_1 \) within the set of all segments of \( e \), i.e.,  
\[
e_2 = \{ W_i \mid W_i \notin e_1 \}.
\]  

Now, suppose that \( W \in \widetilde{L}_e \).  
By Lemma~\ref{PlusAndStar}, we have  
\(
W^{\ast} = W^{(I)} N^{r}_W = e.
\)  
This implies that every segment \( W_i \) in \( e_2 \) must appear in \( W^{(I)} \).  
On the other hand, each segment in \( e_1 \) can either be included in \( W^{(I)} \) or in \( N^{r}_W \).  
Then, for every \( W \in \widetilde{L}_e \), a segment in \( e_1 \) may or may not act as a blocker in \( W \).  
It follows that the number of possible choices for \( \widetilde{L}_e \) is given by  
\(
\abs{\widetilde{L}_e} = 2^{l_1}.
\)  

Similarly, we obtain  
\(
\abs{\widetilde{R}_e} = 2^{l_2}.
\)  

\end{proof}

\begin{thm}\label{Det-LCn}
We have \( \theta_{LC_n}(X) \neq 0\) if and only if \(n< 8\).
\end{thm}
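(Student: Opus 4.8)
The plan is to read off the determinant from the factorization available for $\ll$-smooth monoids. By Theorem~\ref{llsmoothLCn}, $LC_n$ is $\ll$-smooth, so \cite[Theorem 5.11]{Sha-Det2} applies and gives
\[
\theta_{LC_n}(X) = \pm\prod_{e\in E(LC_n)}\widetilde{\theta}_{e}(Y_e),\qquad Y_e=\{y_s\mid s\in \widetilde L_e\widetilde R_e\},\ y_s=\sum_{t\ll s}\mu_{LC_n}(t,s)x_t .
\]
Since this product is nonzero exactly when every factor is a nonzero polynomial, the whole problem reduces to deciding, for each idempotent $e$, whether the local determinant $\widetilde{\theta}_{e}(Y_e)$ of the $\widetilde L_e\times \widetilde R_e$ block of the (sandwich) Cayley table vanishes, and to tracking how this depends on $n$. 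I would organise the proof around this single invariant.

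First I would classify the idempotents through their canonical forms $e=W_1\cdots W_{m_e}$, which by Lemma~\ref{idem} have no blocker segments, and invoke Lemma~\ref{NumberOfPlusAndStar}: the local matrix has shape $2^{l_1}\times 2^{l_2}$, where $l_1,l_2$ count the single-letter segments that can expand into a blocker on the appropriate side. Consequently an idempotent with no expandable single-letter segment gives a $1\times1$ block with nonzero entry $y_e$ and never obstructs, so the question concentrates on idempotents carrying several expandable single-letter segments. For those, the rows and columns are indexed by the subsets $A,B$ of expanded segments (turning $a_j$ into $a_{j-1}a_j$ on the left, resp.\ $a_ja_{j+1}$ on the right), and the entry attached to $(A,B)$ is $y_{W_AV_B}$ precisely when $W_A\boldsymbol* V_B\neq 0$; otherwise it is $0$. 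The decisive point is the rule for when an entry vanishes: using \eqref{LCn-relations}, \eqref{LCn-relations-res1} and the description of the zero in Lemma~\ref{LCnZero}, an entry is $0$ either because the product $W_AV_B$ falls into the zero ideal (its content has no two cyclically consecutive empty positions, i.e.\ it fails Condition~\ref{itm:star}) or because the product leaves the relevant $\mathscr H$-class, so that $W_A\sharp V_B=0$. I would make this rule completely explicit.

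With the zero-pattern in hand, the evaluation of $\widetilde{\theta}_e(Y_e)$ becomes a determinant over independent variables $y_s$: it is a nonzero polynomial exactly when the bipartite graph of nonzero entries admits a transversal and no cancellation among coincident products occurs. When the expandable segments of $e$ are spread widely on the cycle, their expansions use disjoint positions, the block is a tensor product of nonsingular $2\times2$ pieces, and $\widetilde{\theta}_e\neq 0$. The obstruction is a crowding phenomenon: once the cyclic canonical form forces expandable segments close enough that their expansions (around the wrap) overfill the circle, enough entries are pushed into the zero of Lemma~\ref{LCnZero} that the zero-pattern no longer supports a transversal, and $\widetilde{\theta}_e\equiv 0$. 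I would show that the smallest circle on which such a fatal configuration occurs has exactly $n=8$ positions: for $1\le n\le 7$ every idempotent's block still admits a transversal (a finite check, naturally settled by the explicit small Cayley tables / the GAP computations), while for each $n\ge 8$ one exhibits one critical idempotent whose block has no transversal and proves $\widetilde{\theta}_e\equiv0$ directly, e.g.\ by displaying a linear dependence among its rows.

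The main obstacle is exactly this local determinant computation. Everything hinges on pinning down, for the extremal idempotents, the precise set of pairs $(A,B)$ for which $W_AV_B$ hits the zero of $LC_n$ (distinguishing genuine $\sharp$-collapses from collapses to the semigroup zero), because—as the contrast between $n=7$ and $n=8$ shows—an isolated vanishing entry is harmless, and it is only the accumulation of zeros forced by the cyclic packing at $n=8$ that destroys all transversals. Establishing that this threshold is sharp, and that the critical block stays singular for every $n\ge 8$ (so that the product $\prod_e\widetilde{\theta}_e$ continues to vanish), is the delicate combinatorial heart of the argument; the separated-segment (tensor-product) cases and the small-$n$ verifications are comparatively routine.
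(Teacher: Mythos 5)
Your overall frame --- reducing via Theorem~\ref{llsmoothLCn} and \cite[Theorem 5.11]{Sha-Det2} to the local determinants \(\widetilde{\theta}_e(Y_e)\), bounding block sizes by Lemma~\ref{NumberOfPlusAndStar}, and locating a critical configuration at \(n=8\) --- is exactly the paper's. But the combinatorial mechanism you propose for the threshold is inverted, and your plan as stated would misclassify the critical block. You claim that when the expandable single-letter segments of \(e\) are widely spread, their expansions occupy disjoint positions and the block is a tensor product of nonsingular \(2\times 2\) pieces, while singularity comes from ``crowding'' that pushes entries into the zero of Lemma~\ref{LCnZero}. The paper's critical idempotent is \(e=a_1a_5\) for \(n\geq 8\): its two single-letter segments are precisely the widely spread configuration (both cyclic gaps of size at least \(3\), which is exactly what requires \(n\geq 8\) and yields \(\abs{\widetilde{L}_e}=\abs{\widetilde{R}_e}=4\)), the expansions \(a_na_1\), \(a_4a_5\), \(a_1a_2\), \(a_5a_6\) use pairwise disjoint positions, and yet the block is singular. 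Moreover, the vanishing entries are not collapses to the semigroup zero: for instance
\[
a_1a_4a_5\cdot a_1a_2a_5 \;=\; a_1a_2a_4a_5 \;=\; a_1a_2a_3a_5 \;=\; a_1a_3a_5
\]
by \eqref{LCn-relations} and \eqref{LCn-relations-res1}, which is a perfectly nonzero element; the entry dies because \((st)^{+}=a_1a_3a_5\neq a_1a_4=s^{+}\), i.e.\ a \(\sharp\)-collapse caused by the skip relations merging the two disjoint expansions into one long segment. In the paper's \(4\times 4\) table, the rows indexed by \(a_1a_4a_5\) and \(a_4a_5a_na_1\) each have their unique nonzero entry in the same column (the one indexed by \(a_1a_5\)), so the zero-pattern admits no transversal and \(\widetilde{\theta}_e\equiv 0\). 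Your dichotomy predicts the opposite (a nonsingular Kronecker product) for exactly this configuration, so the ``delicate combinatorial heart'' you flag would have failed at its first test case.

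The role of crowding is likewise reversed: small \(n\) is what \emph{saves} the determinant, not what kills it. For \(n\leq 7\) no idempotent can carry two single-letter segments each preceded by a gap of length at least \(3\), so by Lemma~\ref{NumberOfPlusAndStar} one has \(l_1\leq 1\), hence \(\abs{\widetilde{L}_e}\leq 2\), and the paper disposes of the four resulting cases (\(m_e\in\{1,2\}\), \(\abs{\widetilde{L}_e}\in\{1,2\}\)) by exhibiting \(1\times 1\) blocks or \(2\times 2\) blocks that are triangular with nonzero diagonal --- a short structural argument, with no appeal to machine computation (the paper contains no GAP verification, and your deferral of \(1\leq n\leq 7\) to a ``finite check'' leaves unproved precisely the bound \(l_1\leq 1\) that makes the case analysis finite and uniform). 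Finally, note that the paper settles all \(n\geq 8\) at once with the single idempotent \(a_1a_5\), whereas your proposal would need, for each \(n\geq 8\), both a critical idempotent and a proof of singularity; once the mechanism is corrected to the \(\sharp\)-collapse above, that uniformity comes for free, since the same four-row table (with \(a_n\) in place of a wrapped expansion) works verbatim for every \(n\geq 8\).
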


\begin{proof}
By Lemma~\ref{PlusAndStar}, we have  
\[
\widetilde{L}_{a_1a_5} = \{a_1a_5, a_1a_4a_5, a_na_1a_5, a_na_1a_4a_5\}
\]
and  
\[
\widetilde{R}_{a_1a_5} = \{a_1a_5, a_1a_2a_5, a_1a_5a_6, a_1a_2a_5a_6\}.
\]

For \( n \geq 8 \), we have \( \abs{\widetilde{L}_{a_1a_5}} = \abs{\widetilde{R}_{a_1a_5}} = 4 \).  
By Lemma 5.8 in \cite{Sha-Det2}, we compute the submatrix \( \widetilde{L}_{a_1a_5} \times \widetilde{R}_{a_1a_5} \), extracted from the Cayley table \( (S, \boldsymbol{*}) \), as follows:
\[
\begin{array}{c|cccc}
 & a_1a_5 & a_1a_2a_5 & a_1a_5a_6 & a_1a_2a_5a_6 \\ \hline
 a_1a_5 & a_1a_5 & a_1a_2a_5 & a_1a_5a_6 & a_1a_2a_5a_6 \\
 a_1a_4a_5 & a_1a_4a_5 & \cdot & \cdot & \cdot \\
 a_5a_na_1 & a_5a_na_1 & \cdot & a_na_1a_5a_6 & \cdot \\
 a_4a_5a_na_1 & a_4a_5a_na_1 & \cdot & \cdot & \cdot 
\end{array}
\]
where the zero is represented with a dot.
To compute the entries the table, at position \((s,t)\), we only need to verify that  
\[
(st)^{+} = s^{+} \quad \text{and} \quad (st)^{\ast} = t^{\ast}.
\]

Since its determinant is zero, 
this is sufficient to show that the determinant of \( \theta_{LC_n}(X) \) is zero, for every \( n \geq 8 \). 

It is evident that the determinant of the monoid \( LC_1 \) is non-zero.  
The Cayley table of \( LC_2 \) is as follows:
\[
\begin{array}{c|ccc}
     & a_1   & a_2   & 1 \\ \hline
 a_1 & a_1   & \cdot & a_1 \\
 a_2 & \cdot & a_2   & a_2 \\
 1   & a_1   & a_2   & 1
\end{array}
\]
which indicates that the determinant is non-zero.

Now, suppose that $3 \leq n \leq 7$.

Let $e$ be an idempotent of $LC_n$
and let \( W_1 W_2 \cdots W_{m_e} \) denote the canonical form of \( e \) over \( A \).
As \( n \leq 7 \) and there is a two-letter distance between every neighboring segments of \( W \), we have \( m_e \leq 2 \).

By Lemma~\ref{NumberOfPlusAndStar}, since \( n \leq 7 \), we have  
\[
l_1=\abs{\big\{ W_i \mid \abs{W_i} = 1, \ \text{and} \ d(W_{i-1}, W_i) \geq 3 \big\}} \leq 1.
\]  
Thus, it follows that  
\(
1 \leq \abs{\widetilde{L}_e} \leq 2.
\)

Then, we have the following cases:

\( \abs{\widetilde{L}_e} = 1 \) and \( m_e = 1 \):

In this case, \( e \) is an idempotent with a single segment of length greater than one.  
Thus, the submatrix \( \widetilde{L}_e \times \widetilde{R}_e \) contains only a single entity, which is \( e \), and its determinant is non-zero.

\( \abs{\widetilde{L}_e} = 1 \) and \( m_e = 2 \):

This case occurs only when \( n = 7 \).  
It implies that \( e = a_i a_{i+4} a_{i+3} \) for some \( i \) modulo \( 7 \).  
Again, we obtain the same result as in the previous case.

\( \abs{\widetilde{L}_e} = 2 \) and \( m_e = 1 \):

In this case, \( e \) is a single letter, namely \( a_i \), for some integer \( i \).  
By Lemma~\ref{NumberOfPlusAndStar}, we have \( \abs{\widetilde{R}_e} = 2 \).  
The submatrix \( \widetilde{L}_{a_i} \times \widetilde{R}_{a_i} \) is given by:
\[
\begin{array}{c|cc}
            & a_i    & a_i a_{i+1} \\ \hline
 a_i        & a_i    & a_i a_{i+1} \\
 a_{i-1}a_i & a_{i-1}a_i & \cdot 
\end{array}
\]
That its determinant is non-zero.

\( \abs{\widetilde{L}_e} = 2 \) and \( m_e = 2 \):
 
This case occurs only when \( n = 7 \).  
Here, \( e \) consists of two single-letter segments separated by a distance of 3 letters, namely \( a_i a_{i+4} \), for some integer \( i \).  
By Lemma~\ref{NumberOfPlusAndStar}, we also have \( \abs{\widetilde{R}_e} = 2 \).  
The submatrix \( \widetilde{L}_{a_i a_{i+4}} \times \widetilde{R}_{a_i a_{i+4}} \), extracted from the Cayley table \( (S, \boldsymbol{*}) \), is given by:
\[
\begin{array}{c|cc}
                      & a_i a_{i+4}         & a_i a_{i+1} a_{i+4} \\ \hline
 a_i a_{i+4}          & a_i a_{i+4}         & a_i a_{i+1} a_{i+4} \\
 a_i a_{i+3} a_{i+4}  & a_i a_{i+3} a_{i+4} & \cdot 
\end{array}
\]
Since its determinant is non-zero, it follows that \( \det(LC_n) \neq 0 \) for \( 1 \leq n \leq 7 \).
\end{proof}


\section*{Acknowledgments}
The author was partially supported by CMUP, member of LASI, which is financed by national funds through FCT -- Funda\c c\~ao para a Ci\^encia e a Tecnologia, I.P., under the projects with reference UIDB/00144/2020 and UIDP/00144/2020.


\bibliographystyle{plain}
\bibliography{ref-Det}

\end{document}